\theoremstyle{plain}
\newtheorem{thm}{Theorem}[section]
\newtheorem*{thm*}{Theorem}
\numberwithin{equation}{section}
\newtheorem{cor}{Corollary}[section]
\newtheorem{defn}{Definition}[section]
\theoremstyle{definition}
\newcounter {own}
\def\theown {\thesection  .\arabic{own}}
\newcounter{alphabet}
\newcounter{tmp}
\newcommand{\ds}{\displaystyle}
\newcounter{minutes}\setcounter{minutes}{\time}
\newcounter{hours}\setcounter{hours}{\time}
\begin{document}
\bibliographystyle{amsplain}
\title{Sharp results on sampling with derivatives in  bandlimited functions}

%=========================================================================
\thanks{%$^\dagger$
File:~\jobname .tex,
          printed: 2013-07-30,
          \thehours.\ifnum\theminutes<10{0}\fi\theminutes}
%=========================================================================

\author{A. Antony Selvan}

\address{A. Antony Selvan,
Indian Institute of Technology Dhanbad, Dhanbad-826004, India.}
\email{antonyaans@gmail.com}
%
%\author{R. Radha $^\dagger$}
%\address{R. Radha, Department of Mathematics,
%Indian Institute of Technology Madras, Chennai--600 036, India.}
%\email{radharam@iitm.ac.in}
%\subjclass[2000]{Primary  42C15, 94A20}
%\keywords{ Bernstein's inequality, frames, Meyer scaling function, nonuniform sampling, Riesz basis, Wirtinger's inequality.\\
%$^\dagger$ {\tt Corresponding author}
%}
\maketitle
\pagestyle{myheadings}
\markboth{A. Antony Selvan}{Sharp results on sampling with derivatives in  bandlimited functions }
\begin{abstract}
We discuss the problems of uniqueness, sampling and reconstruction with derivatives in the space of bandlimited functions.
We prove that if $X=\{x_i:i\in\mathbb{Z}\}$, $\cdots<x_{i-1}<x_i<x_{i+1}<\cdots$, is a separated 
sequence of real numbers such that
$\delta:=\sup_{i}(x_{i+1}-x_i)<\dfrac{\nu_k}{\sigma}$, then any bandlimited function of bandwidth $\sigma$ can be reconstructed  uniquely  and stably from
its nonuniform samples $\{f^{(j)}(x_i):j=0,1,\dots, k-1, i\in\mathbb{Z}\}$, where $\nu_k$ denotes the Wirtinger-Cimmino constant.
We also prove that if $\delta\leq \dfrac{\nu_k}{\sigma}$, then $X$ is a set of uniqueness for the space of bandlimited functions of bandwidth $\sigma$ when the samples involving the first $k-1$ derivatives.
As a by-product, we obtain  the sharp the maximum gap condition for samples involving first derivative. \\\\

\noindent {\it Key words and phrases} : atomic systems, bandlimited functions, frames, Hermite interpolation, K-frames, Riesz basis, Cimmino's inequality.
\vspace{3mm}\\
\noindent {\it 2000 AMS Mathematics Subject Classification} ---42C15, 94A20
\end{abstract}

%%%%%%%%%%%%%%%%%%%%%%%%%%%%%%%%%%%%%%%%%%%%%%%%%%
%
% Finally, you can write your article
%
\section{Introduction}
Let $\mathcal{B}_\sigma$ denote  the space of
entire functions  of exponential type $\leq\sigma$ that are square integrable on the real axis. 
The classical Shannon's sampling theorem states that every $f\in\mathcal{B}_\sigma$ can be reconstructed from the sampling formula
\begin{eqnarray*}
f(x)=\ds\sum_{n\in\mathbb{Z}}f\left(\dfrac{n\pi}{\sigma}\right)\dfrac{\sin \sigma\left(x-n\pi/\sigma\right)}{\sigma \left(x-n\pi/\sigma\right)}.
\end{eqnarray*}
In many applications, such as aircraft instrument communications, air traffic control simulation, or telemetry \cite{Fogel}, one
can consider the possibility of obtaining sampling expansion which involved sample values of a function and its derivatives.
Fogel \cite{Fogel}, Jagermann and Fogel \cite{JagermanFogel}, Linden and Abramson \cite{LindenAbramson} extended the Shannon sampling theorem  in this direction. They proved that
if the values of $f$ and its first $R-1$ derivatives are known on the sequence $\left\{rn\pi/\sigma:n\in\mathbb{Z}\right\}$, then every $f\in \mathcal{B}_\sigma$ can be reconstructed from  the sampling formula
\begin{eqnarray*}
f(x)=\ds\sum_{n\in\mathbb{Z}}\sum_{k=0}^{R-1}
f_k\left(x_n\right)\dfrac{(x-x_n)^k}{k!}\left[\dfrac{\sin \dfrac{\sigma}{R}\left(x-x_n\right)}{\dfrac{\sigma}{R} \left(x-x_n\right)}\right]^R,
\end{eqnarray*}
where $x_n=rn\pi/\sigma$ and $f_k(x_n)$ are linear combinations of $f(x_n), f^\prime(x_n),\dots,f^{(k)}(x_n):$
$$f_k(x_n):=\sum_{i=0}^k {k\choose i}(\sigma/R)^{k-i}\left[\dfrac{d^{k-i}}{dx^{k-i}}\left(\dfrac{x}{\sin x}\right)^R\right]\Bigg|_{x=0}f^{(i)}(x_n).$$
The problem of nonuniform sampling expansion involving derivatives is well studied in the literature.
Rawn  in \cite{Rawn} extended the Kadec $1/4-$theorem for sampling involving derivatives. He proved that if $\{x_n:n\in\mathbb{Z}\}$ is a sampling sequence belonging  to the class
$S=\left\{(x_n):|x_n-n|\leq d< 1/{4R},~\forall ~n\in\mathbb{Z}\right\}$, then any $f\in\mathcal{B}_\sigma$ can be recovered from its samples $\{f^{(j)}(x_n):j=0,1,\dots, R-1, n\in\mathbb{Z}\}$.
The general nonuniform sampling problem involving derivatives was studied by Gr\"{o}chenig and others in terms of Beurling density. They proved that
if the Beurling-Landau density, $D(X)$ of the set $X$, is greater than $\tfrac{k\pi}{\sigma}$, then any $\sigma$-bandlimited function $f$ can be reconstructed uniquely and stably from its sample
values $\{f^{(j)}(x_n):j=0,1,\dots, k-1, n\in\mathbb{Z}\}$. For further details, we refer the reader to \cite{Beurling, Grochenig2, Landau}.

The uniqueness problem  is studied in the mathematical literature in the context of separation of zeros \cite{Cluine,Walker}.
In this paper, we examine the separation of real zeros of multiplicity $k$ for the bandlimited functions.
For a non-zero entire funtion $f$, we define the longest zero-free interval of $f$ of order $k-1\geq0$ as follows:
$$M_k(f):=\sup\left\{b-a:f^{(l)}(x)\neq 0,x\in(a,b),l=0,1,\dots,k-1\right\}.$$
If all the real zeros of $f$ are bounded on the left or right, then $M(f)=\infty$. Otherwise, the zeros of $f$ can be arranged in a doubly infinite sequence $\{x_n:n\in\mathbb{Z}\}$ with $x_n\leq x_{n+1}$.
In this case,
$$M_k(f):=\sup\left\{x_{n+1}-x_n:f^{(l)}(x_n)=0,n\in\mathbb{Z},l=0,1,\dots,k-1\right\}.$$
We now define $$d_k:=\inf\left\{M_k(f):0\neq f\in\mathcal{B}_\sigma\right\}.$$  Walker in \cite{Walker} proved that $d_1=\frac{\pi}{\sigma}$. In this paper, we prove that $d_2=\frac{2\pi}{\sigma}$ and   
hence we obtain a sharp result on uniqueness theorem involving first derivative for the bandlimited functions. We also provide lower and upper bounds for the constants $d_k, ~k\geq 3$.

The numerical aspects of nonuniform sampling expansion involving derivatives was studied by Gr\"{o}chenig in \cite{Grochenig1}. 
If $f^{(j)}(x),~j=0,\dots,k-1$ are sampled at uniform  rate, it follows from Beurling density theorem that
$f$ can be reconstructed if the uniform gap is less than $k\pi/\sigma$. From the above discussion, it is \textit{conjectured} in \cite{Adcock, Grochenig1, Raza} that 
if $\sup_{i}(x_{i+1}-x_i)<\tfrac{k\pi}{\sigma}$, then any bandlimited function  can be reconstructed  uniquely  and stably from
its nonuniform samples $\{f^{(j)}(x_i):j=0,1,\dots, k-1, i\in\mathbb{Z}\}$. Gr\"{o}chenig in \cite{Grochenig1} proved that if $\sup_{i}(x_{i+1}-x_i)<\tfrac{\pi}{\sigma}$, then any bandlimited function  can be reconstructed  uniquely  and stably from its nonuniform samples $\{f(x_i) :i\in\mathbb{Z}\}$.
In \cite{Raza}, Razafinjatovo obtained a frame algorithm for reconstructing a function $f\in \mathcal{B}_\sigma$ from its nonuniform samples $\{f^{(j)}(x_i):j=0,1,\dots, k-1, i\in\mathbb{Z}\}$ 
with maximum gap condition, namely  $\sup_{i}(x_{i+1}-x_i)=\delta<\frac{2}{\sigma}((k-1)!\sqrt{(2k-1)2k})^{1/k}$ using Taylor's polynomial approximation.
The authors in \cite{Adcock} improved the Razafinjatovo's result using Hermite interpolation. 
Radha and the author in \cite{AntoRad2} obtained a better bound than the one given in \cite{Adcock} using Wirtinger-Sobolev inequality.
While this above conjecture for $k\geq 2$ is still unresolved for the past twenty five years, the case $k=2$ is settled in this paper as follows.
\begin{thm}\label{thm1.1}
 If $\{x_i:i\in\mathbb{Z}\}$ is a separated set such that $\sup_{i}(x_{i+1}-x_i)<\tfrac{2\pi}{\sigma}$, then there exists a Bessel sequence $\left\{g_{1,i}, g_{2,i}:i\in\mathbb{Z}\right\}$ in $\mathcal{B}_\sigma$ such that
 $$f^{\prime}(x)=\sum\limits_{i\in\mathbb{Z}} f(x_i)g_{1,i}(x)+\sum\limits_{i\in\mathbb{Z}} f^{\prime}(x_i)g_{2,i}(x),$$
 for every $f\in \mathcal{B}_\sigma$. The function $f$ can be computed using the relation $$f(x)=\int\limits_{x_i}^x f^{\prime}(t)\mathrm{d}t+f(x_i).$$
\end{thm}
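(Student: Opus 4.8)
The plan is to reconstruct $f'$ directly (it again lies in $\mathcal{B}_\sigma$, with $\|f'\|_2\le\sigma\|f\|_2$) and then recover $f$ by the Fundamental Theorem of Calculus exactly as in the statement. Write $I_i=[x_i,x_{i+1}]$ and $\ell_i=x_{i+1}-x_i$, so that $\delta:=\sup_i\ell_i<2\pi/\sigma$ and, by separation, $\delta_0:=\inf_i\ell_i>0$. On $I_i$ the function $h:=f'$ is pinned down by three numbers: the endpoint values $h(x_i)=f'(x_i)$, $h(x_{i+1})=f'(x_{i+1})$, and the mean $\int_{I_i}h=f(x_{i+1})-f(x_i)$. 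First I would let $p_i$ be the unique quadratic polynomial on $I_i$ matching these three data and put $r_i:=h-p_i$, so that $r_i(x_i)=r_i(x_{i+1})=0$ and $\int_{I_i}r_i=0$. The Wirtinger--Cimmino inequality, whose sharp constant for functions with vanishing endpoint values and vanishing mean is $\nu_2=2\pi$ (extremal $\sin(2\pi(\,\cdot\,-x_i)/\ell_i)$), then gives
\[
\|r_i\|_{L^2(I_i)}\ \le\ \frac{\ell_i}{2\pi}\,\|r_i'\|_{L^2(I_i)}\ \le\ \frac{\delta}{2\pi}\Bigl(\|f''\|_{L^2(I_i)}+\|p_i'\|_{L^2(I_i)}\Bigr),
\]
since $r_i'=f''-p_i'$.

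Next I would sum over $i$. From $\|h\|_{L^2(I_i)}\le\|p_i\|_{L^2(I_i)}+\|r_i\|_{L^2(I_i)}$ and Minkowski's inequality in $\ell^2(\mathbb{Z})$,
\[
\|f'\|_{L^2(\mathbb{R})}\ \le\ \frac{\delta}{2\pi}\,\|f''\|_{L^2(\mathbb{R})}+\Bigl(\sum_i\bigl(\|p_i\|_{L^2(I_i)}+\tfrac{\delta}{2\pi}\|p_i'\|_{L^2(I_i)}\bigr)^2\Bigr)^{1/2}.
\]
Bernstein's inequality gives $\|f''\|_2\le\sigma\|f'\|_2$, and since $\delta\sigma/(2\pi)<1$ the first term is absorbed into the left side. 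A routine rescaling of the quadratic interpolant to $[0,1]$ bounds $\|p_i\|_{L^2(I_i)}^2$ and $\|p_i'\|_{L^2(I_i)}^2$ by constant multiples of $\ell_i\bigl(|f'(x_i)|^2+|f'(x_{i+1})|^2\bigr)+\ell_i^{-1}|f(x_{i+1})-f(x_i)|^2$ and of $\ell_i^{-1}\bigl(|f'(x_i)|^2+|f'(x_{i+1})|^2\bigr)+\ell_i^{-3}|f(x_{i+1})-f(x_i)|^2$ respectively; using $\delta_0\le\ell_i\le\delta$, summing, and $|f(x_{i+1})-f(x_i)|^2\le 2|f(x_{i+1})|^2+2|f(x_i)|^2$, one gets a constant $c=c(\delta,\delta_0,\sigma)>0$ with
\[
\|f'\|_2^2\ \le\ c\Bigl(\sum_i|f(x_i)|^2+\sum_i|f'(x_i)|^2\Bigr),\qquad f\in\mathcal{B}_\sigma .
\]
Conversely, separation of $X$ gives the Bessel estimate $\sum_i|f(x_i)|^2+\sum_i|f'(x_i)|^2\le C(\|f\|_2^2+\|f'\|_2^2)\le C(1+\sigma^2)\|f\|_2^2$.

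Finally I would pass to operator form (the language of atomic systems / $K$-frames). Let $k_x,k_x^{(1)}\in\mathcal{B}_\sigma$ be the reproducing kernel and its $x$-derivative, so $\langle f,k_x\rangle=f(x)$, $\langle f,k_x^{(1)}\rangle=f'(x)$; let $A\colon\mathcal{B}_\sigma\to\ell^2(\mathbb{Z})\oplus\ell^2(\mathbb{Z})$, $Af=\bigl((f(x_i))_i,(f'(x_i))_i\bigr)$; and let $K\colon\mathcal{B}_\sigma\to\mathcal{B}_\sigma$, $Kf=f'$, bounded by Bernstein. The Bessel estimate says $A$ is bounded, the displayed lower bound says $\|Kf\|\le c^{1/2}\|Af\|$; in particular $\ker A\subseteq\ker K$, so $Af\mapsto Kf$ extends to a bounded operator $B\colon\ell^2(\mathbb{Z})\oplus\ell^2(\mathbb{Z})\to\mathcal{B}_\sigma$ (set $B=0$ on $\overline{\operatorname{Range}A}^{\perp}$) with $K=BA$. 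Setting $g_{1,i}:=Be_i^{(1)}$, $g_{2,i}:=Be_i^{(2)}$ for the standard basis vectors $e_i^{(1)},e_i^{(2)}$ of the two summands, the family $\{g_{1,i},g_{2,i}:i\in\mathbb{Z}\}$ is Bessel ($\sum_i|\langle h,g_{1,i}\rangle|^2+\sum_i|\langle h,g_{2,i}\rangle|^2=\|B^{\ast}h\|^2\le\|B\|^2\|h\|^2$) and
\[
f'\ =\ Kf\ =\ B(Af)\ =\ \sum_i f(x_i)\,g_{1,i}+\sum_i f'(x_i)\,g_{2,i},
\]
the series converging in $\mathcal{B}_\sigma$; the recovery of $f$ is then the identity $f(x)=\int_{x_i}^x f'(t)\,dt+f(x_i)$.

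The decisive step is establishing the lower bound with the \emph{correct} critical constant, i.e., paragraphs one and two: everything downstream is soft functional analysis and everything upstream is elementary interpolation bookkeeping. The subtlety is that one must feed precisely the three data $f'(x_i)$, $f'(x_{i+1})$, $f(x_{i+1})-f(x_i)$ into the sharp vanishing-endpoints-and-mean inequality, since it is exactly the zero-mean condition that pushes the threshold from the naive $\delta<\pi/\sigma$ (Wirtinger with endpoint conditions only) up to $\delta<2\pi/\sigma$, together with the absorption, which works only because the inequality $\delta\sigma<2\pi$ is strict.
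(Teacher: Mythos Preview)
Your argument is correct and, at its core, coincides with the paper's route, though the packaging differs. Your quadratic interpolant $p_i$ of $f'$ (matching $f'(x_i)$, $f'(x_{i+1})$ and $\int_{I_i}f'=f(x_{i+1})-f(x_i)$) is exactly the derivative $H_3'$ of the cubic Hermite interpolant $H_3(x_i,x_{i+1},f;\cdot)$ that the paper uses, so $r_i=f'-p_i=(f-H_3)'$; and your ``vanishing endpoints plus zero mean'' Wirtinger inequality with constant $2\pi$ is precisely the $r=2$, $k=1$ case of Cimmino's inequality applied to the antiderivative, which is where the paper's $\nu_2=2\pi$ originates. The one substantive difference is in bounding the remainder: the paper exploits the fact that $(f-H_3)''$ is $L^2(I_i)$--orthogonal to linear polynomials (integrate by parts twice against $H_3''$, using the four vanishing boundary conditions of $f-H_3$), whence $\|r_i'\|_{L^2(I_i)}\le\|f''\|_{L^2(I_i)}$ directly and one obtains the clean contraction $\|f'-Af'\|_2\le(\delta\sigma/2\pi)\|f'\|_2$; you instead use the triangle inequality $\|r_i'\|\le\|f''\|+\|p_i'\|$ and absorb the extra $\|p_i'\|$ term into the sample sums afterwards. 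Both routes reach the same lower $K$-frame inequality for $\mathcal{D}$, and your Douglas-type factorization $K=BA$ is exactly the content of Theorem~\ref{thm2.1} (atomic systems for $K$), so the conclusion follows either way, the paper's version simply giving tidier constants.
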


The paper is organized as follows. In section 2, we discuss some preliminary facts needed for the later sections. In section 3, we  prove a uniqueness theorem for bandlimited function involving derivative samples.
In section 4, we provide iterative reconstruction algorithms for the recovery of bandlimited functions from its samples involving derivatives.

\section{Preliminaries}
This section provides some useful terminology and results in order to prove our main results.
Throughout this paper, we shall adopt the following notations: $\mathcal{H}$ is a separable Hilbert space and $\mathcal{L}(\mathcal{H})$ is the space of bounded operators on $\mathcal{H}$.

\begin{defn}
A sequence $\{f_n:~n\in\mathbb{Z}\}$ in $\mathcal{H}$ is called a Riesz basis if it satisfies the following conditions:
\begin{itemize}
\item [$(i)$] It is complete, \textit{i.e.}, $\overline{span}\{f_n\}=\mathcal{H}$.
\item [$(ii)$] There exist two constants $A,B>0$ such that
\begin{equation}
A\sum_{n\in\mathbb{Z}}|d_n|^2\leq\big\|\sum_{n\in\mathbb{Z}}d_n f_n\big\|^2_{\mathcal{H}}\leq
B\sum_{n\in\mathbb{Z}}|d_n|^2,
\end{equation}
for all $(d_n)\in\ell^2(\mathbb{Z})$.
\end{itemize}
\end{defn}
\begin{defn}
A sequence $\{f_n:n\in\mathbb{Z}\}$ in  $\mathcal{H}$ is called a frame if
there exist two  constants $A,B> 0$ such that for all $f \in \mathcal{H}$ we have
\begin{equation}\label{frameineq}
A||f||^2\leq \sum_n |\langle f,f_n \rangle|^2 \leq B||f||^2 .
\end{equation}
The sequence $\{f_n:n\in\mathbb{Z}\}$ is called a Bessel sequence if at least the upper bound in \eqref{frameineq} is satisfied.
The numbers $A, B$ are called the frame bounds.
\end{defn}
Every Riesz basis is a frame. 
If $\{f_n\}$ is a frame for $\mathcal{H}$ with bounds $A,B$, then  $$Sf=\sum_n \langle f,f_n \rangle f_n$$
is a positive self-adjoint invertible operator and called the frame operator associated with  $\{f_n\}$. Let $\rho=\dfrac{2}{A+B}$. 
Then $f$ can be recovered by the \textit{frame algorithm}: Set
\begin{eqnarray*}
f_0&=&0,\\
f_{n+1}&=&f_n+\rho S(f-f_n),~ n\geq 0.
\end{eqnarray*}
Then we have $\lim\limits_{n\to\infty}f_n=f$. The error estimate after $n$ iterations turns out to be
$$
\|f-f_n\|_{\mathcal{H}}\leq\left(\dfrac{B-A}{B+A}\right)^n\|f\|_\mathcal{H}.
$$
We refer the reader to \cite{AlGr2, Young} for further details.

\begin{defn}
Let $K\in\mathcal{L}(\mathcal{H})$. A sequence of vectors $\{f_n:~n\in\mathbb{Z}\}$ in $\mathcal{H}$ is said to be a $K$-{\it frame} if there
exist two constants $A,~B>0$ such that
\begin{equation}
A\|K^*f\|_\mathcal{H}^2\leq\displaystyle\sum_{n\in\mathbb{Z}}|\langle
f,f_n\rangle_\mathcal{H}|^2\leq B\|f\|_\mathcal{H}^2,
\end{equation}
for every $f\in\mathcal{H}$.  When  $K$ is the identity operator, it coincides with ordinary frames.
\end{defn}
\begin{defn}
 Let $K\in\mathcal{L}(\mathcal{H})$. A sequence of vectors $\{f_n:~n\in\mathbb{Z}\}$ in $\mathcal{H}$ is said to be an {\it atomic system} for $K$ if  the following statements hold:
\begin{itemize}
 \item[$(i)$]  $\{f_n:~n\in\mathbb{Z}\}$ is a Bessel  sequence in $\mathcal{H}$.
 \item[$(ii)$] There exists $B>0$ such that for every $f\in \mathcal{H}$ there exists $d_f=(d_n)\in\ell^2(\mathbb{Z})$ such that $\|d_f\|_2\leq B\|f\|_\mathcal{H}$ and 
 $Kf=\ds\sum_{n\in\mathbb{Z}}d_nf_n$.
\end{itemize}
\end{defn}
\begin{thm}$($\cite{Gavruta}$)$\label{thm2.1}
 A sequence of vectors $\{f_n:~n\in\mathbb{Z}\}$ in $\mathcal{H}$ is  an {\it atomic system} for $K$ if  and only if it is a K-frame for $\mathcal{H}$. In this case, there exists a Bessel sequence
 $\{g_n:n\in\mathbb{Z}\}$ such that
$$Kf=\sum_{n\in\mathbb{Z}}\langle f,g_n\rangle f_n ~and~ K^*f=\sum_{n\in\mathbb{Z}}\langle f,f_n\rangle g_n,$$ 
for every $f\in\mathcal{H}$.
\end{thm}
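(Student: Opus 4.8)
The plan is to establish the equivalence through its two implications and then to extract the dual Bessel sequence from the factorization produced along the way. Throughout, let $T\colon\ell^2(\mathbb{Z})\to\mathcal{H}$ denote the synthesis operator $T(c)=\sum_n c_n f_n$ associated with $\{f_n\}$, so that its adjoint is the analysis operator $T^*f=(\langle f,f_n\rangle)_n$. The key preliminary observation is that in \emph{both} definitions the Bessel condition is present: it is condition $(i)$ in the atomic-system definition, and it is the upper inequality $\sum_n|\langle f,f_n\rangle|^2\le B\|f\|^2$ in the $K$-frame definition. In either case this is precisely the statement that $T$ is a well-defined bounded operator, hence that $T^*$ is bounded as well.

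For the implication \emph{atomic system $\Rightarrow$ $K$-frame}, the upper bound is immediate from condition $(i)$. For the lower bound I would fix $f\in\mathcal{H}$ and use the decomposition $Kf=\sum_n d_n f_n=Td_f$ with $\|d_f\|_2\le B\|f\|$ guaranteed by condition $(ii)$. Then for any $g\in\mathcal{H}$ one computes $\langle g,Kf\rangle=\langle g,Td_f\rangle=\langle T^*g,d_f\rangle$, and Cauchy--Schwarz gives $|\langle K^*g,f\rangle|\le\|T^*g\|_2\,\|d_f\|_2\le B\|f\|\,\|T^*g\|_2$. Taking the supremum over $\|f\|=1$ yields $\|K^*g\|\le B\|T^*g\|_2$, that is, $B^{-2}\|K^*g\|^2\le\sum_n|\langle g,f_n\rangle|^2$, which is the desired lower $K$-frame inequality.

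For the reverse implication \emph{$K$-frame $\Rightarrow$ atomic system}, condition $(i)$ again follows from the upper frame bound. The lower bound $A\|K^*f\|^2\le\sum_n|\langle f,f_n\rangle|^2=\|T^*f\|_2^2$ rewrites as $\|K^*f\|\le A^{-1/2}\|T^*f\|_2$ for every $f$. This is exactly the hypothesis of Douglas' range-inclusion (factorization) theorem applied to the pair $K$ and $T$, which furnishes a bounded operator $C\colon\mathcal{H}\to\ell^2(\mathbb{Z})$ with $K=TC$. Writing $Cf=(d_n)_n$ and setting $d_f:=Cf$ then gives $Kf=TCf=\sum_n d_n f_n$ with $\|d_f\|_2\le\|C\|\,\|f\|$, which is precisely condition $(ii)$. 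I expect the appeal to Douglas' factorization theorem to be the main technical step, as it is what upgrades the pointwise norm domination into the genuine operator identity $K=TC$ on which everything else rests.

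Finally, to produce the dual sequence I would define $g_n:=C^*e_n$, where $\{e_n\}$ is the standard orthonormal basis of $\ell^2(\mathbb{Z})$ and $C$ is the operator from the previous step. Then $(Cf)_n=\langle Cf,e_n\rangle=\langle f,C^*e_n\rangle=\langle f,g_n\rangle$, so $C$ is the analysis operator of $\{g_n\}$; boundedness of $C$ therefore means $\{g_n\}$ is Bessel. The first reconstruction formula follows from $Kf=TCf=\sum_n\langle f,g_n\rangle f_n$, and the second from passing to adjoints, $K^*=C^*T^*$, whence $K^*f=C^*(\langle f,f_n\rangle)_n=\sum_n\langle f,f_n\rangle g_n$. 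This simultaneously verifies both identities and completes the argument.
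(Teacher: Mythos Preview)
The paper does not give its own proof of this statement; Theorem~2.1 is quoted as a preliminary result from G\u{a}vru\c{t}a's paper \cite{Gavruta} and is used only as a tool in Section~4. So there is no ``paper's proof'' to compare against here.

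That said, your argument is correct and is in fact essentially the proof given by G\u{a}vru\c{t}a. The structure is the same: the Bessel condition makes $T$ bounded; the atomic-system $\Rightarrow$ $K$-frame direction goes through the pairing $\langle K^*g,f\rangle=\langle T^*g,d_f\rangle$ and Cauchy--Schwarz; the converse hinges on Douglas' majorization/factorization theorem, turning the inequality $\|K^*f\|\le A^{-1/2}\|T^*f\|_2$ (equivalently $KK^*\le A^{-1}TT^*$) into a genuine factorization $K=TC$; and the dual Bessel sequence $g_n=C^*e_n$ then yields both reconstruction formulas via $K=TC$ and $K^*=C^*T^*$. Your identification of Douglas' theorem as the main technical step is exactly right.
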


For an integrable function $f$, the
Fourier transform $\widehat{f}$ of $f$ is defined by
$$\widehat f (w):= \int\limits_{\mathbb{R}}  f(x) e^{-2\pi i w  x} \mathrm{d}x, \quad w \in \mathbb{R}.$$
If $f \in L^1\cap L^2(\mathbb{R}),$ one has the Plancherel
formula $||f ||_2 = ||\widehat f||_2 $. As $ L^1 \cap L^2(\mathbb{R})$
is dense in $L^2(\mathbb{R})$, the definition of Fourier transform
is extended to functions in $L^2(\mathbb{R})$.

For $\sigma>0$, let $\mathcal{B}_\sigma$ denote the space of all $\sigma$-bandlimited functions, \textit{i.e.,}
\begin{eqnarray*}
\mathcal{B}_\sigma=\left\{f\in L^2(\mathbb{R}): \textrm{supp}\widehat{f}\subseteq [-\tfrac{\sigma}{2\pi},\tfrac{\sigma}{2\pi}]\right\}.
\end{eqnarray*}
The celebrated theorem of Paley-Wiener says that $\mathcal{B}_\sigma$ coincides with the space of
entire functions  of exponential type $\leq\sigma$. Moreover, $\mathcal{B}_\sigma$ is a reproducing kernel Hilbert space with reproducing kernel $\mathcal{K}(x,y)=\tfrac{\sin\sigma\left(x-y\right)}{\sigma \left(x-y\right)}$. 
The following inequality is well-known in the literature.
\begin{thm}[Bernstein's inequality]
If $f\in \mathcal{B}_\sigma$, then $f^\prime\in \mathcal{B}_\sigma$ and
\begin{eqnarray}
\|f^{(k)}\|_2\leq \sigma^k\|f\|_2.
\end{eqnarray}
\end{thm}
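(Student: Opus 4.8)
The plan is to move to the frequency side, where differentiation becomes multiplication by the bounded multiplier $2\pi i w$, and then to read off both conclusions from the Plancherel identity $\|f\|_2=\|\widehat f\|_2$ recorded above. I would begin from the description $\mathcal{B}_\sigma=\{f\in L^2(\mathbb{R}):\textrm{supp}\,\widehat f\subseteq[-\tfrac{\sigma}{2\pi},\tfrac{\sigma}{2\pi}]\}$, so that for $f\in\mathcal{B}_\sigma$ the transform $\widehat f$ is supported on a bounded interval. By Cauchy--Schwarz this forces $\widehat f\in L^1(\mathbb{R})$ as well, so Fourier inversion holds pointwise and $f(x)=\int_{-\sigma/2\pi}^{\sigma/2\pi}\widehat f(w)\,e^{2\pi i w x}\,\mathrm{d}w$.

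The crucial step is to identify the transform of $f^{(k)}$. Since $\widehat f$ has compact support, each multiplier $w\mapsto(2\pi i w)^k\widehat f(w)$ again lies in $L^1\cap L^2(\mathbb{R})$, and this is exactly what I would use to justify differentiating $k$ times under the integral sign. This gives $f^{(k)}(x)=\int_{-\sigma/2\pi}^{\sigma/2\pi}(2\pi i w)^k\widehat f(w)\,e^{2\pi i w x}\,\mathrm{d}w$, that is, $\widehat{f^{(k)}}(w)=(2\pi i w)^k\widehat f(w)$. In particular $\widehat{f^{(k)}}$ is again supported in $[-\tfrac{\sigma}{2\pi},\tfrac{\sigma}{2\pi}]$ and lies in $L^2$, so $f^{(k)}\in\mathcal{B}_\sigma$; the case $k=1$ yields $f'\in\mathcal{B}_\sigma$, as the statement requires.

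The norm bound then drops out of Plancherel together with the pointwise estimate $|2\pi w|\le\sigma$ valid on the support of $\widehat f$:
\[
\|f^{(k)}\|_2^2=\|\widehat{f^{(k)}}\|_2^2=\int_{-\sigma/2\pi}^{\sigma/2\pi}|2\pi w|^{2k}\,|\widehat f(w)|^2\,\mathrm{d}w\le\sigma^{2k}\int_{\mathbb{R}}|\widehat f(w)|^2\,\mathrm{d}w=\sigma^{2k}\|f\|_2^2,
\]
and taking square roots finishes the proof. I expect the only point requiring genuine care to be the differentiation-under-the-integral step: it is here that one must invoke the \emph{compactness} of $\textrm{supp}\,\widehat f$ rather than mere membership in $L^2$, so that the differentiated integrands admit an integrable dominating function. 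If one prefers to avoid this for higher $k$, an equally clean route is to establish the $k=1$ case as above and then iterate, since that argument already shows $f'$ lands back in $\mathcal{B}_\sigma$.
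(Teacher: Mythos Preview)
Your argument is correct and is the standard Fourier-side proof of Bernstein's inequality for $\mathcal{B}_\sigma$. Note, however, that the paper does not actually prove this statement: it is introduced with ``The following inequality is well-known in the literature'' and stated without proof, so there is nothing to compare against. Your proof is entirely in keeping with the paper's own viewpoint---indeed, the paper later uses exactly the identity $\widehat{f'}(w)=2\pi i w\,\widehat f(w)$ on the support of $\widehat f$ when bounding the differentiation operator from below on $\mathcal{B}_{\sigma,\epsilon}$.
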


%In order to prove our main results, we make use of the following terminology and some inequalities.
\begin{defn}
Let $X=\{x_{i}:i\in\mathbb{Z}\}$ be a sequence of real or complex numbers. Then
\begin{enumerate}
%\item [$(i)$]$X$ is separated by a constant $\gamma>0$ if $\inf\limits_{i\neq j} |x_i-x_j|\geq\gamma$.
%\item [$(ii)$]$\{x_{n}:n\in\mathbb{Z}\}$ is said to be a  set of uniqueness for $\mathcal{H}$ if $f(x_n)=0$ for all $n$ implies $f\equiv0$.
\item [$(i)$]  $X$ is said to be a set of uniqueness of order $k-1$ for $\mathcal{B}_\sigma$ %with respect to the weight $\{w_n\in\mathbb{R}^*:n\in\mathbb{Z}\}$
if $$f^{(l)}(x_i)=0,~ n\in\mathbb{Z}, l=0,1,\dots,k-1,$$ implies that $f\equiv 0$.
\item [$(ii)$] $X$ is said to be a  stable set of sampling of order $k-1$ for $\mathcal{B}_\sigma$ 
if there exist constants $A$, $B>0$ such that
\begin{eqnarray}\label{pap3eqn2.7}
A\| f\|_2^2&\leq&\ds\sum\limits_{i\in\mathbb{Z}}
\ds\sum\limits_{l=0}^{k-1}|f^{(l)}(x_i)|^2 \leq B\| f\|_2^2,
\end{eqnarray}
for all $f\in\mathcal{B}_\sigma$.
\item [$(iii)$]  $X$ is said to be a set of interpolation of order $k-1$ for $\mathcal{B}_\sigma$ %with respect to the weight $\{w_n\in\mathbb{R}^*:n\in\mathbb{Z}\}$
if the interpolation problem  
$$f^{(l)}(x_i)=c_{il},~ n\in\mathbb{Z},~l=0,1,\dots,k-1,$$
has a solution $f\in \mathcal{B}_\sigma$ for every square summable sequence $\{c_{il}:i\in\mathbb{Z}\},l=0,1,\dots,k-1$.
\end{enumerate}
\end{defn}
\begin{defn}
 A set $X=\{x_{i}:i\in\mathbb{Z}\}$ is said to be a  stable set of sampling of order $(m,k-1)$ for $\mathcal{B}_\sigma$ with respect to the weight $\{w_{il}\in\mathbb{R}^*:l=0,\dots,k-1,i\in\mathbb{Z}\}$
if there exist constants $A$, $B>0$ such that
\begin{eqnarray}\label{pap3eqn2.7}
A\| f^{(m)}\|_2^2&\leq&\ds\sum\limits_{i\in\mathbb{Z}}
\ds\sum\limits_{l=0}^{k-1}w_{il}|f^{(l)}(x_i)|^2 \leq B\| f\|_2^2,
\end{eqnarray}
for all $f\in\mathcal{B}_\sigma$.
\end{defn}

Let $X=\{x_n\}$ be sequence of distinct real numbers. To each $X$ and $m\in\mathbb{N}$, we associate an exponential system 
$$\mathcal{E}(X;m):=\left\{e^{2\pi ix_nt},te^{2\pi ix_nt},\dots,t^{m-1}e^{2\pi ix_nt}:x_n\in X\right\}.$$
It is well known that $\mathcal{E}\left(\dfrac{m\pi}{\sigma}\mathbb{Z},m\right)$ is a Riesz basis for $L^2[-\frac{\sigma}{2\pi},\frac{\sigma}{2\pi}]$. Arguing as in \cite{Seip}, 
we can prove that $X$ is a set of uniqueness and interpolation of order $m-1$ for $\mathcal{B}_\sigma$ if and only if $\mathcal{E}(X,m)$ is a Riesz basis for $L^2[-\frac{\sigma}{2\pi},\frac{\sigma}{2\pi}]$.
N. Levinson in \cite{Levinson} proved that the completeness of the system $\mathcal{E}(X;m)$ is unaffected if we replace finitely many points $x_n$ by the same number of points $y_n\notin X$. R. M. Young in \cite{Young}
showed that if $\{f_n\}$ is a Riesz basis for $\mathcal{H}$ and $\{g_n\}$ is a complete sequence in $\mathcal{H}$ such that $$\ds\sum\|f_n-g_n\|<\infty,$$ then $\{g_n\}$ is a Riesz basis for $\mathcal{H}$. 
Consequently, we have the following
\begin{thm}[\textbf{Replacement Theorem}]
The Riesz basis property of the system $\mathcal{E}(X;m)$ is unaffected if we replace finitely many points $x_n$ by the same number of points $y_n\notin X$.
\end{thm}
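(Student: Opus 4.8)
The plan is to deduce the statement directly from the two facts recalled just above: N. Levinson's stability theorem for completeness of $\mathcal{E}(X;m)$ and R. M. Young's perturbation theorem for Riesz bases. Write $X=\{x_n:n\in\mathbb{Z}\}$ and let $X'$ be obtained from $X$ by replacing the points $x_{n_1},\dots,x_{n_N}$ with points $y_{n_1},\dots,y_{n_N}\notin X$, which we also take to be distinct from one another, so that $X'$ is again a sequence of distinct reals. Enumerate the two exponential systems compatibly: set $f_{n,j}=t^{j}e^{2\pi i x_n t}$ and $g_{n,j}=t^{j}e^{2\pi i \tilde{x}_n t}$ for $0\le j\le m-1$ and $n\in\mathbb{Z}$, where $\tilde{x}_n=x_n$ except that $\tilde{x}_{n_l}=y_{n_l}$ for $l=1,\dots,N$. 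Then $\mathcal{E}(X;m)=\{f_{n,j}\}$, $\mathcal{E}(X';m)=\{g_{n,j}\}$, and $f_{n,j}=g_{n,j}$ for every pair $(n,j)$ outside the finite set $\{(n_l,j):1\le l\le N,\ 0\le j\le m-1\}$.

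Assume first that $\mathcal{E}(X;m)$ is a Riesz basis for $L^2[-\tfrac{\sigma}{2\pi},\tfrac{\sigma}{2\pi}]$; in particular it is complete. Since every $f_{n,j}$ and $g_{n,j}$ lies in $L^2$ of a bounded interval, $\|f_{n,j}-g_{n,j}\|<\infty$ for each index, and because the difference vanishes for all but $mN$ pairs we get
$$\sum_{n,j}\|f_{n,j}-g_{n,j}\|<\infty.$$
By Levinson's theorem the completeness of $\mathcal{E}(X;m)$ is not destroyed by replacing finitely many points, so $\{g_{n,j}\}=\mathcal{E}(X';m)$ is complete in $L^2[-\tfrac{\sigma}{2\pi},\tfrac{\sigma}{2\pi}]$. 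Young's theorem, applied with the Riesz basis $\{f_{n,j}\}$ and the complete, absolutely close sequence $\{g_{n,j}\}$, then gives that $\mathcal{E}(X';m)$ is a Riesz basis.

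For the reverse implication, observe that $x_{n_1},\dots,x_{n_N}\notin X'$ (the points of $X$ we kept are distinct from them), so $X$ arises from $X'$ by the same type of finite replacement; running the argument of the previous paragraph with $X$ and $X'$ interchanged shows that if $\mathcal{E}(X';m)$ is a Riesz basis then so is $\mathcal{E}(X;m)$. Hence the Riesz-basis property is unaffected by the replacement. I expect no real obstacle here beyond bookkeeping: the two points requiring care are organizing the index sets so that the two systems are paired off with $f_{n,j}=g_{n,j}$ away from a finite set — which makes the series in Young's hypothesis trivially finite — and invoking Levinson's theorem to supply completeness of the perturbed system, which Young's theorem does not give on its own.
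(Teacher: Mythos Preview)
Your argument is correct and is precisely the approach the paper indicates: the paper does not give an explicit proof but states the Replacement Theorem as an immediate consequence of Levinson's stability-of-completeness theorem together with Young's perturbation theorem for Riesz bases, and you have simply written out those details (including the symmetric argument for the reverse implication).
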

The following four results are the main ingredients to prove our main results.
\begin{thm}[Schmidts's inequality]$($\cite{Agarwal}$)$.
If $p_n(x)$ is a polynomial of degree $\leq n$, then
\begin{eqnarray}\label{pap3eqn1.4}
\int\limits_a^b|p_n^\prime(x)|^2~\mathrm{d}x\leq \mu_n(b-a)^{-2}\int\limits_a^b|p_n(x)|^2~\mathrm{d}x,
\end{eqnarray}
where $\mu_n=\dfrac{n(n+1)(n+2)(n+3)}{2}$.
\end{thm}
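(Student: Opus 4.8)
The plan is to strip the interval out of \eqref{pap3eqn1.4} by an affine change of variables and then to reduce the resulting scale-free inequality to a single eigenvalue estimate for an explicit matrix. First I would substitute $x=a+(b-a)t$ and set $q(t):=p_n\bigl(a+(b-a)t\bigr)$, again a polynomial of degree $\le n$. A direct computation gives $\int_a^b|p_n'(x)|^2\,\mathrm{d}x=(b-a)^{-1}\int_0^1|q'(t)|^2\,\mathrm{d}t$ and $\int_a^b|p_n(x)|^2\,\mathrm{d}x=(b-a)\int_0^1|q(t)|^2\,\mathrm{d}t$, so that \eqref{pap3eqn1.4} is equivalent to the scale-free statement $\int_0^1|q'|^2\,\mathrm{d}t\le\mu_n\int_0^1|q|^2\,\mathrm{d}t$ for all $q$ with $\deg q\le n$. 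In other words, it suffices to bound by $\mu_n$ the squared operator norm of differentiation on the $(n+1)$-dimensional space of polynomials of degree $\le n$ carrying the $L^2[0,1]$ inner product.

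Next I would diagonalize the right-hand norm using the shifted Legendre polynomials $P_k^{\ast}(t)=P_k(2t-1)$, which satisfy $\int_0^1 P_j^{\ast}P_k^{\ast}\,\mathrm{d}t=\delta_{jk}/(2k+1)$, so that $\varphi_k:=\sqrt{2k+1}\,P_k^{\ast}$ is an orthonormal basis. Writing $q=\sum_{k=0}^n a_k\varphi_k$ gives $\int_0^1|q|^2=\sum_k|a_k|^2$, while the classical identity $P_k'=\sum_{0\le j<k,\,k-j\ \mathrm{odd}}(2j+1)P_j$ yields, after the shift, $\varphi_k'=2\sqrt{2k+1}\sum_{j<k,\,k-j\ \mathrm{odd}}\sqrt{2j+1}\,\varphi_j$. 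Hence $q'=\sum_j(Ma)_j\varphi_j$, where $M$ is the real matrix with $M_{jk}=2\sqrt{(2j+1)(2k+1)}$ if $k>j$ and $k-j$ is odd, and $M_{jk}=0$ otherwise. Consequently $\int_0^1|q'|^2=\|Ma\|^2=\langle M^{T}Ma,a\rangle$, and the claim is precisely the bound $\lambda_{\max}(M^{T}M)\le\mu_n$ for the largest eigenvalue of the positive semidefinite matrix $M^{T}M$ (the complex case follows verbatim since $M$ is real).

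The decisive point is a parity decoupling: a nonzero entry $M_{jk}$ forces $j$ and $k$ to have opposite parity, so $M^{T}M$ connects only indices of equal parity and therefore splits as an orthogonal direct sum of an ``even'' and an ``odd'' block. Since each block is positive semidefinite, its largest eigenvalue is at most its trace, giving $\lambda_{\max}(M^{T}M)\le\max\{\operatorname{tr}(\text{even block}),\operatorname{tr}(\text{odd block})\}$. A short computation shows $(M^{T}M)_{pp}=4(2p+1)\sum_{i<p,\,p-i\ \mathrm{odd}}(2i+1)=2p(p+1)(2p+1)$, using $\sum_{i<p,\,p-i\ \mathrm{odd}}(2i+1)=\binom{p+1}{2}$; in particular the diagonal increases in $p$, so the parity class containing $n$ carries the larger trace. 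Summing this diagonal over that class telescopes: from $\mu_n-\mu_{n-2}=2n(n+1)(2n+1)$ and the base values $\mu_0=0$, $\mu_1=12$, induction gives $\operatorname{tr}(\text{top block})=\mu_n$, which closes the estimate. I expect the main obstacle to be recognizing and exploiting this parity splitting and carrying out the combinatorial bookkeeping that identifies the top-block trace with the closed form $\mu_n$; by contrast the affine reduction and the Legendre expansion are routine.
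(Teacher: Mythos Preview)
The paper does not prove this statement at all: Schmidt's inequality is quoted from \cite{Agarwal} as a known tool, with no argument supplied. There is therefore no ``paper's own proof'' to compare your proposal against.

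That said, your argument is correct and self-contained. The affine reduction to $[0,1]$ is standard, and the identification of the best constant with $\lambda_{\max}(M^{T}M)$ via the shifted Legendre basis is the right framework (indeed, this eigenvalue problem is how the sharp constant $\mu_n$ is classically derived). Your key observation---that the parity structure of the derivative relation $P_k'=\sum_{j<k,\,k-j\ \mathrm{odd}}(2j+1)P_j$ makes $M^{T}M$ block-diagonal along parity classes, so that the crude bound $\lambda_{\max}\le\operatorname{tr}$ applied blockwise already yields $\mu_n$---is a clean shortcut. The supporting computations check out: $(M^{T}M)_{pp}=2p(p+1)(2p+1)$, the telescoping identity $\mu_n-\mu_{n-2}=2n(n+1)(2n+1)$ holds, and the block containing $n$ has trace $\mu_n$ while the other has trace $\mu_{n-1}<\mu_n$. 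One minor remark: your trace bound proves the inequality with the stated constant but does not by itself show $\mu_n$ is sharp; since the theorem as stated only asserts the one-sided inequality, this is not a gap.
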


\begin{thm}[Cimmino's inequality]$($\cite{Agarwal}$)$.
%Let $f$ be a complex valued  function defined on the interval $\left[a,b\right]$.
If $f$ is $r$ times continuously differentiable in $\left[a,b\right]$ with $f^{(l)}(a)=f^{(l)}(b)=0$, $0\leq l\leq r-1$, then
\begin{eqnarray}\label{eqn2.8}
\int\limits_a^b|f^{(k)}(x)|^2~\mathrm{d}x\leq \left(\dfrac{b-a}{\lambda_{r,k}}\right)^{2r-2k}\int\limits_a^b|f^{(r)}(x)|^2~\mathrm{d}x, ~0\leq k\leq r-1,
\end{eqnarray}
where $\lambda_{r,k}^{2r-2k}$ is the first eigenvalue of the boundary value problem
\begin{eqnarray*}
u^{(2r)}(x)-\lambda(-1)^{r+k} u^{(2k)}(x)=0,~\lambda>0,~~ x\in[0,1],\\
u^{(k)}(0)=u^{(k)}(1)=0, ~~0\leq k\leq r-1,~~u\in C^{2r}[0,1].
\end{eqnarray*}
In \eqref{eqn2.8} equality holds if and only if $f(x)$ is the first eigenfunction of the boundary  value problem
$u^{(2r)}(x)-\lambda(-1)^{r+k} u^{(2k)}(x)=0,~u^{(k)}(a)=u^{(k)}(b)=0, ~~0\leq k\leq r-1.$

\end{thm}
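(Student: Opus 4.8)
The plan is to prove the scale‑invariant inequality by a variational argument, identifying the sharp constant with the lowest eigenvalue of the stated boundary value problem. First I would normalize to the unit interval: given $f$ on $[a,b]$, set $g(t)=f\bigl(a+(b-a)t\bigr)$, so that $g^{(j)}(t)=(b-a)^j f^{(j)}\bigl(a+(b-a)t\bigr)$ and hence $\int_a^b |f^{(j)}|^2 = (b-a)^{1-2j}\int_0^1 |g^{(j)}|^2$. Taking $j=k$ and $j=r$ and comparing powers of $(b-a)$ (both sides carry the factor $(b-a)^{1-2k}$), the claimed inequality on $[a,b]$ is equivalent to
\begin{equation*}
\lambda_{r,k}^{\,2r-2k}\int_0^1 |g^{(k)}(t)|^2\,\mathrm{d}t \;\leq\; \int_0^1 |g^{(r)}(t)|^2\,\mathrm{d}t
\end{equation*}
for every $g$ with $g^{(l)}(0)=g^{(l)}(1)=0$, $0\leq l\leq r-1$. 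Thus it suffices to show that the infimum of the Rayleigh quotient $R[g]=\int_0^1 |g^{(r)}|^2 \big/ \int_0^1 |g^{(k)}|^2$ over this admissible class equals $\lambda_{r,k}^{\,2r-2k}$.

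Next I would place the problem in the Hilbert space $H^r_0(0,1)$ (the closure of $C_c^\infty(0,1)$ in the $H^r$‑norm), whose members are exactly the functions satisfying the boundary conditions above. An iterated Poincar\'e inequality shows that $\int_0^1 |g^{(r)}|^2$ is an equivalent norm on $H^r_0(0,1)$ and dominates $\int_0^1 |g^{(k)}|^2$ for $k<r$, so $R$ is bounded below by a positive constant. Existence of a minimizer $g_0$ would then follow from the direct method: take a minimizing sequence normalized by $\int_0^1 |g^{(k)}|^2=1$; it is bounded in $H^r_0(0,1)$, hence admits a weakly convergent subsequence, and the compact embedding $H^r_0(0,1)\hookrightarrow H^k(0,1)$ (Rellich) carries the normalization to the limit while weak lower semicontinuity of $\int_0^1 |g^{(r)}|^2$ preserves minimality.

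I would then compute the first variation. For an admissible perturbation $\phi$ (with $\phi^{(l)}(0)=\phi^{(l)}(1)=0$, $0\leq l\leq r-1$), stationarity of $R$ at $g_0$ with value $\Lambda:=R[g_0]$ gives $\int_0^1 g_0^{(r)}\phi^{(r)}=\Lambda\int_0^1 g_0^{(k)}\phi^{(k)}$. Integrating the left side by parts $r$ times and the right side $k$ times, every boundary term carries a factor $\phi^{(m)}$ with $0\leq m\leq r-1$ and therefore vanishes, leaving $(-1)^r\int_0^1 g_0^{(2r)}\phi = \Lambda(-1)^k\int_0^1 g_0^{(2k)}\phi$. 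Since $\phi$ is arbitrary and elliptic regularity makes $g_0$ smooth, this yields $g_0^{(2r)}-\Lambda(-1)^{r+k}g_0^{(2k)}=0$; together with the $2r$ imposed (essential) boundary conditions this is exactly the stated boundary value problem. Hence the minimal Rayleigh value $\Lambda$ is an eigenvalue, and by the Courant--Fischer (Rayleigh) characterization it is the smallest one, namely $\lambda_{r,k}^{\,2r-2k}$. This establishes the inequality; equality forces $g$ to be a minimizer, i.e.\ a first eigenfunction, and rescaling back to $[a,b]$ gives the stated equality characterization.

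The main obstacle I anticipate is the clean identification of the sharp constant with the \emph{first} eigenvalue: one must show both that the infimum is attained and that the resulting Euler--Lagrange solution is the lowest eigenfunction rather than a higher one. This rests on the compactness of $H^r_0(0,1)\hookrightarrow H^k(0,1)$ and on the spectral theory of the associated self‑adjoint generalized eigenvalue problem; verifying positivity of $\lambda_{r,k}^{\,2r-2k}$ through the Poincar\'e inequality is precisely what rules out a degenerate (zero) minimum and guarantees that the constant is finite and sharp.
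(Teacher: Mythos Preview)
The paper does not prove Cimmino's inequality; it quotes the result from \cite{Agarwal} as one of the preliminary tools and gives no argument of its own. So there is no ``paper's proof'' to compare against.

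Your variational approach is the standard one and is correct. The scaling reduction to $[0,1]$ is clean and the powers of $(b-a)$ match as you indicate. Working in $H^r_0(0,1)$, the direct method gives a minimizer via the compact embedding $H^r_0(0,1)\hookrightarrow H^k(0,1)$ and weak lower semicontinuity of $g\mapsto\int_0^1|g^{(r)}|^2$, and the Euler--Lagrange computation correctly produces the stated ODE with the sign $(-1)^{r+k}$. One point worth tightening: when you integrate by parts $r$ times on the left and $k$ times on the right, you are implicitly using $g_0\in H^{2r}(0,1)$, whereas a priori the minimizer lies only in $H^r_0(0,1)$. The honest route is to first record the weak Euler--Lagrange identity
\[
\int_0^1 g_0^{(r)}\phi^{(r)}\,\mathrm{d}t \;=\; \Lambda\int_0^1 g_0^{(k)}\phi^{(k)}\,\mathrm{d}t
\qquad\text{for all }\phi\in H^r_0(0,1),
\]
and then bootstrap regularity (the right-hand side shows $g_0^{(r)}$ has $r$ additional weak derivatives). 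After that the boundary terms vanish exactly as you say, since each carries a factor $\phi^{(m)}$ with $0\le m\le r-1$. With this small adjustment your argument is complete and self-contained, and it supplies what the paper merely cites.
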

\begin{thm}$($\cite{Grochenig1},\cite{Raza}$)$.\label{thm2.6}
Let $A$ be a bounded operator on a Hilbert space $\mathcal{H}$ that satisfies
\begin{align*}
\|f-Af\|_\mathcal{H}\leq C\|f\|_\mathcal{H},
\end{align*}
for every $f\in \mathcal{H}$ and for some $C$, $0<C<1$.
Then $A$ is invertible on $\mathcal{H}$ and $f$ can be recovered from $Af$ by the following iteration algorithm.
Setting
\begin{eqnarray*}
f_0&=& A f ~and \\
f_{n+1}&=&f_n+A(f-f_n), ~n\geq 0,
\end{eqnarray*}
we have $\lim\limits_{n\to\infty}f_n=f$. The error estimate after $n$ iterations is
\begin{align*}
\|f-f_n\|_\mathcal{H}\leq C^{n+1}\|f\|_\mathcal{H}.
\end{align*}
\end{thm}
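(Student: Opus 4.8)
The plan is to recognize the hypothesis as the statement that the operator $I-A$, with $I$ the identity on $\mathcal{H}$, is a strict contraction in operator norm, and then to extract the two standard consequences of this fact: invertibility of $A$ through a Neumann series, and geometric convergence of the stated iteration through an error recursion.

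First I would rewrite the assumption $\|f-Af\|_{\mathcal{H}}\le C\|f\|_{\mathcal{H}}$ (valid for all $f\in\mathcal{H}$) as the operator-norm bound $\|I-A\|\le C<1$. From this, invertibility of $A=I-(I-A)$ is immediate: the Neumann series $\sum_{k\ge 0}(I-A)^k$ converges in $\mathcal{L}(\mathcal{H})$ because $\|(I-A)^k\|\le C^k$ and $\sum_{k\ge 0}C^k<\infty$, and its sum is a two-sided inverse of $A$. This settles the first assertion of the theorem.

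For the iteration, I would track the error $e_n:=f-f_n$ rather than the iterate $f_n$ itself. Since $f_0=Af$, the initial error is $e_0=f-Af=(I-A)f$; and from $f_{n+1}=f_n+A(f-f_n)=f_n+Ae_n$ one obtains the clean recursion $e_{n+1}=f-f_{n+1}=e_n-Ae_n=(I-A)e_n$. Iterating gives $e_n=(I-A)^{n+1}f$, whence $\|f-f_n\|_{\mathcal{H}}=\|(I-A)^{n+1}f\|_{\mathcal{H}}\le C^{n+1}\|f\|_{\mathcal{H}}$. Because $0<C<1$, the right-hand side tends to $0$, which simultaneously yields $\lim_{n\to\infty}f_n=f$ and the advertised error estimate.

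There is no genuine obstacle here; the argument is a textbook contraction-mapping/Neumann computation. The only point requiring a little care is the index bookkeeping: because the initialization $f_0=Af$ already incorporates one application of $A$, the error starts at $e_0=(I-A)f$ rather than at $f$, and this is precisely what produces the exponent $n+1$ (rather than $n$) in the final estimate. I would therefore record the recursion $e_{n+1}=(I-A)e_n$ explicitly and verify the base case $e_0=(I-A)f$ before iterating, so that the exponent in $C^{n+1}$ is justified.
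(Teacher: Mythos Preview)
Your argument is correct and is exactly the standard Neumann-series/contraction computation that underlies this result. Note, however, that the paper does not supply its own proof of this theorem: it is quoted from \cite{Grochenig1} and \cite{Raza} as a known tool, so there is no in-paper proof to compare against. The argument in those references is the same as yours---rewrite the hypothesis as $\|I-A\|\le C<1$, deduce invertibility of $A$ via the Neumann series, and obtain the error bound from the recursion $f-f_{n+1}=(I-A)(f-f_n)$ together with $f-f_0=(I-A)f$. Your remark about the exponent $n+1$ arising from the initialization $f_0=Af$ is the only point that needs care, and you handle it correctly.
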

\begin{thm}[Hermite Interpolation Formula] $($\cite{Spitzbart}$)$.
Let $f$ be $r$ times continuously differentiable in $\left[a,b\right]$ and $\xi,\eta\in[a,b]$. Then the Hermite interpolation polynomial $H_{2r+1}(x)$ of degree $2r+1$ such that
$H_{2r+1}^{(j)}(y)=f^{(j)}(y)$, for $y=\xi,\eta$, $0\leq j\leq r$, is given by
\begin{eqnarray}
H_{2r+1}(\xi,\eta,f;x)=\ds\sum\limits_{k=0}^{r}A_{0k}(x)f^{(k)}(\xi)+\ds\sum\limits_{k=0}^{r}A_{1k}(x)f^{(k)}(\eta),
\end{eqnarray}
where
\begin{eqnarray*}
A_{0k}(x)&=&(x-\eta)^{r+1}\dfrac{(x-\xi)^k}{k!}\ds\sum\limits_{s=0}^{r-k}\dfrac{1}{s!}g_0^{(s)}(\xi)(x-\xi)^s,\\
A_{1k}(x)&=&(x-\xi)^{r+1}\dfrac{(x-\eta)^k}{k!}\ds\sum\limits_{s=0}^{r-k}\dfrac{1}{s!}g_1^{(s)}(\eta)(x-\eta)^s,\\
g_0(x)&=&(x-\eta)^{-(r+1)},\\
g_1(x)&=&(x-\xi)^{-(r+1)}.
\end{eqnarray*}
\end{thm}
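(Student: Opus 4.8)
The plan is to verify directly that the explicit polynomial on the right-hand side satisfies all $2r+2$ interpolation conditions and has degree at most $2r+1$; since the space of polynomials of degree at most $2r+1$ has dimension exactly $2r+2$, this settles existence, and a short divisibility argument yields uniqueness. Writing $\delta_{jk}$ for the Kronecker delta, everything reduces to the two \emph{cardinal} identities
\begin{align*}
A_{0k}^{(j)}(\xi)=\delta_{jk},\qquad A_{0k}^{(j)}(\eta)=0,\qquad 0\le j,k\le r,
\end{align*}
together with the mirror-image statements for $A_{1k}$ obtained by interchanging $\xi$ and $\eta$. Granting these, evaluating the $j$-th derivative of $H_{2r+1}(\xi,\eta,f;x)$ at $x=\xi$ collapses the double sum to the single term $f^{(j)}(\xi)$, and at $x=\eta$ to $f^{(j)}(\eta)$, which is the assertion. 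Throughout I assume $\xi\neq\eta$, as is the case in the intended application to consecutive sampling nodes.

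First I would record the degree and the vanishing at $\eta$, both of which are immediate from the explicit factors. The factor $(x-\eta)^{r+1}$ contributes degree $r+1$, the factor $(x-\xi)^k/k!$ contributes degree $k$, and the inner sum $\sum_{s=0}^{r-k}\frac{1}{s!}g_0^{(s)}(\xi)(x-\xi)^s$ has degree at most $r-k$, so $\deg A_{0k}\le (r+1)+k+(r-k)=2r+1$. Moreover the presence of $(x-\eta)^{r+1}$ forces $A_{0k}$ and its first $r$ derivatives to vanish at $\eta$, giving $A_{0k}^{(j)}(\eta)=0$ for $0\le j\le r$.

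The heart of the matter is the behaviour of $A_{0k}$ near $\xi$, and the key observation is that the inner sum is exactly the degree-$(r-k)$ Taylor polynomial of $g_0(x)=(x-\eta)^{-(r+1)}$ centred at $\xi$. Writing this Taylor polynomial as $g_0(x)+O\big((x-\xi)^{r-k+1}\big)$ near $\xi$ and multiplying by $(x-\eta)^{r+1}=1/g_0(x)$, which is smooth and nonvanishing at $\xi$ since $\xi\neq\eta$, I would obtain
\begin{align*}
(x-\eta)^{r+1}\sum_{s=0}^{r-k}\frac{g_0^{(s)}(\xi)}{s!}(x-\xi)^s=1+O\big((x-\xi)^{r-k+1}\big),\qquad x\to\xi.
\end{align*}
Multiplying by the remaining factor $(x-\xi)^k/k!$ and noting $k+(r-k+1)=r+1$ then gives $A_{0k}(x)=\frac{(x-\xi)^k}{k!}+O\big((x-\xi)^{r+1}\big)$ near $\xi$. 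Since the $j$-th derivative of $(x-\xi)^k/k!$ at $\xi$ equals $\delta_{jk}$ for $0\le j\le r$, while the remainder, being divisible by $(x-\xi)^{r+1}$, contributes nothing to derivatives of order at most $r$ at $\xi$, we conclude $A_{0k}^{(j)}(\xi)=\delta_{jk}$, completing the cardinal identities.

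The argument for $A_{1k}$ is verbatim after the substitution $\xi\leftrightarrow\eta$, $g_0\leftrightarrow g_1$. For uniqueness, the difference of two interpolating polynomials of degree at most $2r+1$ would have zeros of order $r+1$ at both $\xi$ and $\eta$, hence be divisible by $(x-\xi)^{r+1}(x-\eta)^{r+1}$ of degree $2r+2$, forcing it to vanish identically. The main obstacle is the displayed Taylor-truncation identity: one must check carefully that multiplying the truncated Taylor polynomial of $1/(x-\eta)^{r+1}$ by $(x-\eta)^{r+1}$ reproduces $1$ up to an error vanishing to order $r-k+1$ at $\xi$, so that after the factor $(x-\xi)^k$ this error is pushed to order $r+1$ and leaves the derivatives of order $\le r$ at $\xi$ exactly as for the monomial $(x-\xi)^k/k!$.
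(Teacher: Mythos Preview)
Your argument is correct. The paper, however, does not supply its own proof of this statement: it is quoted as a background result with a citation to Spitzbart, so there is no in-paper proof to compare against. Your direct verification---checking that each $A_{0k}$ has degree at most $2r+1$, that the factor $(x-\eta)^{r+1}$ kills all derivatives of order $\le r$ at $\eta$, and that the Taylor-polynomial identity forces $A_{0k}(x)=\dfrac{(x-\xi)^k}{k!}+O\big((x-\xi)^{r+1}\big)$ near $\xi$---is exactly the standard route (and is essentially what Spitzbart does). The uniqueness step via divisibility by $(x-\xi)^{r+1}(x-\eta)^{r+1}$ is also fine. One small remark: since $A_{0k}$ is a polynomial, the error term $A_{0k}(x)-\dfrac{(x-\xi)^k}{k!}$ is a polynomial vanishing to order $r+1$ at $\xi$, so it is genuinely divisible by $(x-\xi)^{r+1}$ and the $O$-notation can be replaced by an exact polynomial factorisation if you wish to make the argument fully algebraic.
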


Using Cimmino's inequality, the following error estimate 
 \begin{eqnarray}\label{errorestimate}
\int\limits_a^b|f^{(r-1)}(x)-H_{2r-1}^{(r-1)}(x)|^2~\mathrm{d}x\leq \left(\dfrac{b-a}{\nu_r}\right)^{2}\int\limits_a^b|f^{(r)}(x)|^2~\mathrm{d}x,
\end{eqnarray}
is proved in \cite{Agarwal}, where $\nu_r=\lambda_{r,r-1}$. We explicitly mention certain values for the constants $\nu_{r}$ as given in \cite{Agarwal}.
\begin{eqnarray*}
\nu_1=\pi, ~\nu_2=2\pi, ~\nu_3=8.9868.
\end{eqnarray*}

\section{A Uniqueness Theorem}
Let us define the sample sets $\Lambda_N=\{\lambda_n(l):l\in\mathbb{Z}\},~N\geq 1$ as follows:
\begin{eqnarray*}
 \lambda_n(l)&:=& \left\{\begin{array}{ccc}
\vspace{0.3cm}
\dfrac{k\pi l}{\sigma}& \mbox{\hspace{0.4cm}if $|l|>N$,}\\
\vspace{0.3cm}0& \mbox{if $l=0$,}\\
\text{sgn}{(l)}(2l-1)\dfrac{k\pi(N+1)}{\sigma(2N+1)} & \mbox{\hspace{1.2cm}if $1\leq |l|\leq N$.}
\end{array} \right.
\end{eqnarray*}
It is clear that $\Lambda_N$ is equal to $\dfrac{k\pi}{\sigma}\mathbb{Z}$ except at finite number of points. 
Then it follows from Replacement Theorem that $\Lambda_N$ is an interpolating set of order $k-1$ for $\mathcal{B}_\sigma$.
Consider the sequence $(c_\lambda)_{\lambda\in\Lambda_N}$ defined as
\begin{eqnarray*}
 c_\lambda&:=& \left\{\begin{array}{cc}
1&\hspace{-1cm}\mbox{if $\lambda=0$,}\\
0&\mbox{\hspace{0.4cm}if $\lambda\in\Lambda_N-\{0\}$.}
\end{array} \right.
\end{eqnarray*}
Then there exists a non-zero function $g_N\in\mathcal{B}_\sigma$ such that $g_N^{(l)}(\lambda)=c_\lambda, 0\leq l\leq k-1$. 
Notice that $M_k(g_N)=\dfrac{2k(N+1)\pi}{(2N+1)\sigma}$ converges to $\dfrac{k\pi}\sigma{}$ as $N\to\infty$.
Therefore, $d_k\leq\dfrac{k\pi}{\sigma}.$

\begin{thm}
If a non-zero function $f\in \mathcal{B}_\sigma$  has infinitely many zeros on the real axis of multiplicity $k$, then there exists at least one pair of consecutive zeros whose distance apart is 
greater than $\dfrac{\nu_{k}}{\sigma}$. Consequently, $\dfrac{\nu_{k}}{\sigma}\leq d_k\leq \dfrac{k\pi}{\sigma}$.
\end{thm}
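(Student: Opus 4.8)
The plan is to run, on every gap between two consecutive high‑multiplicity zeros, the Hermite interpolation error estimate \eqref{errorestimate} (which is itself a consequence of Cimmino's inequality), add the resulting local inequalities, and play the outcome against Bernstein's inequality. Suppose, for contradiction, that $f\in\mathcal B_\sigma$ is non‑zero, has infinitely many real zeros of multiplicity $k$, and that any two consecutive such zeros lie within distance $\nu_k/\sigma$ of each other. Since $f$ is entire and not identically zero, its zeros are isolated, so the real zeros of multiplicity at least $k$ form a strictly increasing sequence tending to $\pm\infty$; I treat the principal case in which it is unbounded on both sides, say $\{x_n:n\in\mathbb Z\}$ with $x_n<x_{n+1}$, so that the closed intervals $[x_n,x_{n+1}]$ cover $\mathbb R$.

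On each $[x_n,x_{n+1}]$ I apply the Hermite interpolation formula with $r=k$ and nodes $x_n,x_{n+1}$: because $f^{(l)}(x_n)=f^{(l)}(x_{n+1})=0$ for $l=0,1,\dots,k-1$, all the interpolation data vanish, so by uniqueness of the interpolant the Hermite polynomial $H_{2k-1}$ is identically zero on that interval. Hence \eqref{errorestimate} reduces to
\[
\int_{x_n}^{x_{n+1}}\!|f^{(k-1)}|^2\,\mathrm{d}x\le\Bigl(\tfrac{x_{n+1}-x_n}{\nu_k}\Bigr)^{2}\int_{x_n}^{x_{n+1}}\!|f^{(k)}|^2\,\mathrm{d}x\le\tfrac{1}{\sigma^{2}}\int_{x_n}^{x_{n+1}}\!|f^{(k)}|^2\,\mathrm{d}x,
\]
the last step using the hypothesis $x_{n+1}-x_n\le\nu_k/\sigma$. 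Summing over $n\in\mathbb Z$ and using that the intervals $[x_n,x_{n+1}]$ tile $\mathbb R$ gives $\|f^{(k-1)}\|_2^{2}\le\sigma^{-2}\|f^{(k)}\|_2^{2}$.

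Now iterating Bernstein's inequality shows $f^{(k-1)}\in\mathcal B_\sigma$, and $f^{(k-1)}\not\equiv 0$ (otherwise $f$ is a polynomial, hence, being in $L^2(\mathbb R)$, identically zero). Since $\widehat{f^{(k-1)}}$ is supported in $[-\tfrac{\sigma}{2\pi},\tfrac{\sigma}{2\pi}]$ and $\widehat{f^{(k)}}(w)=2\pi i w\,\widehat{f^{(k-1)}}(w)$, Plancherel's theorem gives
\[
\|f^{(k)}\|_2^{2}=\int_{\mathbb R}4\pi^{2}w^{2}\,|\widehat{f^{(k-1)}}(w)|^{2}\,\mathrm{d}w<\sigma^{2}\,\|f^{(k-1)}\|_2^{2},
\]
the inequality being strict because $\widehat{f^{(k-1)}}$ cannot be concentrated on the null set $\{\pm\sigma/2\pi\}$. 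Combining the two displays yields $\|f^{(k-1)}\|_2^{2}<\|f^{(k-1)}\|_2^{2}$, a contradiction, proving the first assertion. The bound $\nu_k/\sigma\le d_k$ is then immediate from the definition of $d_k$, and together with the upper bound $d_k\le k\pi/\sigma$ furnished by the construction preceding the statement we obtain $\nu_k/\sigma\le d_k\le k\pi/\sigma$.

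The one genuinely delicate point is the last inequality: summing the Cimmino‑type estimates and invoking the plain form of Bernstein's inequality only yields $\|f^{(k-1)}\|_2^{2}\le\|f^{(k-1)}\|_2^{2}$, so the argument is vacuous unless one extracts \emph{strict} inequality somewhere. This forces the use of the sharp, Fourier‑side form of Bernstein's inequality as above — or, equivalently, of the equality characterization in Cimmino's inequality, from which one would instead argue that equality on each interval makes $f$ agree with a solution of $u^{(2k)}+\lambda u^{(2k-2)}=0$, that is, with a trigonometric polynomial plus an algebraic polynomial, and such a function cannot lie in $L^2(\mathbb R)$ unless it vanishes identically. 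A secondary matter is the reduction to the doubly‑infinite configuration and the verification that the intervals $[x_n,x_{n+1}]$ exhaust $\mathbb R$, which is what legitimizes the term‑by‑term summation of the local estimates.
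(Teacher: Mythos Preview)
Your argument is correct and follows the same overall line as the paper's proof: apply Cimmino's inequality on each gap $[x_n,x_{n+1}]$ (you route this through the Hermite error estimate \eqref{errorestimate} with $H_{2k-1}\equiv 0$, which is equivalent), sum over $n$, and play the result against Bernstein to obtain a contradiction. The one substantive difference is where you extract the strict inequality. The paper puts the strictness into the Cimmino step, arguing that equality on a single interval would force $f$ to coincide with the first eigenfunction of $u^{(2k)}+\lambda u^{(2k-2)}=0$, which is a finite combination of functions $x^{l}e^{\lambda_l x}\cos\mu_l x,\ x^{l}e^{\lambda_l x}\sin\mu_l x$ and hence not in $L^{2}(\mathbb{R})$. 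You instead keep Cimmino non-strict and put the strictness into Bernstein via Plancherel, observing that $\widehat{f^{(k-1)}}$ cannot be supported on the null set $\{\pm\sigma/2\pi\}$. Your version is arguably cleaner, since it avoids any discussion of the eigenfunction's explicit form; the paper's version, on the other hand, yields the slightly stronger conclusion that already on at least one gap the local Cimmino estimate is strict. You correctly identify both options in your final paragraph, so you have in fact anticipated the paper's approach as well.
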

\begin{proof}
Let a non-zero function $f\in \mathcal{B}_\sigma$ have infinitely many zeros $x_j$'s of multiplicity $k$ on the real line such that $x_j<x_{j+1}$, $j\in\mathbb{Z}$ and $\bigcup\limits_{j\in\mathbb{Z}}[x_j,x_{j+1}]=\mathbb{R}$.
If possible, there exists $M\leq\dfrac{\nu_{k}}{\sigma}$ such that $x_{j+1}-x_j\leq M$, for every $j$. Since $f^{(l)}(x_j)=f^{(l)}(x_{j+1})=0$, for every $j\in\mathbb{Z}$, $0\leq l\leq k-1$, by Cimmino's inequality
\begin{eqnarray}\label{pap3eqn2.3}
\ds\int\limits_{x_j}^{x_{j+1}}|f^{(k-1)}(x)|^2~\mathrm{d}x<\left(\dfrac{x_{j+1}-x_j}{\nu_{k}}\right)^2\ds\int\limits_{x_j}^{x_{j+1}}|f^{(k)}(x)|^2~\mathrm{d}x.
\end{eqnarray}
Notice that the inequality is strict; otherwise if the equality holds, then  $f(x)$ coincides with the first eigenfunction of the following boundary  value problem:
\begin{eqnarray*}
u^{(2k)}(x)+\lambda u^{(2k-2)}(x)=0,~~ x\in[x_i,x_{i+1}],\\
u^{(l)}(x_i)=u^{(l)}(x_{i+1})=0, ~~0\leq l\leq k-1,~~u\in C^{2k}[x_i,x_{i+1}].
\end{eqnarray*}
Since $f$ and the eigenfunction are entire,  $f$ coincides with the eigenfunction on the whole real axis. Moreover, the eigenfunction is a 
finite linear combination of $\{x^le^{\lambda_lx}\cos\mu_lx, x^le^{\lambda_lx}\sin\mu_lx,l=0,1,2,\dots\}$. (For example, see the case $k=2$ in \cite{Tcheng}).  Clearly, it is not square integrable on the real axis.
Hence  $f\notin L^2(\mathbb{R})$, which is impossible.
Summing over all $j$ in \eqref{pap3eqn2.3}, we get
\begin{eqnarray*}
\ds\int\limits_{\mathbb{R}}|f^{(k-1)}(x)|^2~\mathrm{d}x
&<&\ds\sum\limits_{j}\left(\dfrac{x_{j+1}-x_j}{\nu_{k}}\right)^2\ds\int\limits_{x_j}^{x_{j+1}}|f^{(k)}(x)|^2~\mathrm{d}x\\
&\leq&\left(\dfrac{M}{\nu_{k}}\right)^2\ds\int\limits_{\mathbb{R}}|f^{(k)}(x)|^2~\mathrm{d}x.
\end{eqnarray*}
Taking square root on both sides, we get
\begin{eqnarray}\label{pap3eqn2.4}
\|f^{(k-1)}\|_2<\dfrac{M}{\nu_{k}}\|f^{(k)}\|_2.
\end{eqnarray}
On the other hand, by Bernstein's inequality,
\begin{eqnarray}\label{pap3eqn2.5}
\|f^{(k)}\|_2\leq\sigma~\|f^{(k-1)}\|_2.
\end{eqnarray}
Since $f$ is nonconstant entire function which is square integrable on the real axis, $f^{(k-1)}$ is non-zero.
Combining $\eqref{pap3eqn2.4}$ and $\eqref{pap3eqn2.5}$, we get $M>\dfrac{\nu_{k}}{\sigma}$ which is a contradiction.
\end{proof}

\begin{cor}
If $X=\{x_i:i\in\mathbb{Z}\}$ is sequence of real numbers such that $\sup\limits_{i}(x_{i+1}-x_i)\leq\dfrac{\nu_{k}}{\sigma}$, then $X$ is a set of uniqueness of order $k-1$ for $\mathcal{B}_\sigma$.
\end{cor}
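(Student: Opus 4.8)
The plan is to argue by contradiction and to reduce the statement to the Theorem just proved. Suppose $f\in\mathcal{B}_\sigma$ satisfies $f^{(l)}(x_i)=0$ for all $i\in\mathbb{Z}$ and $l=0,1,\dots,k-1$, but $f\not\equiv 0$; we must derive a contradiction. First I would dispose of degenerate configurations of $X$: since $f$ is a non-zero entire function, its zeros are isolated, so the increasing sequence $\{x_i\}$ cannot have a finite accumulation point; combined with $x_{i+1}-x_i\leq \nu_k/\sigma<\infty$ this forces $x_i\to+\infty$ as $i\to+\infty$ and $x_i\to-\infty$ as $i\to-\infty$. Hence $\bigcup_{i\in\mathbb{Z}}[x_i,x_{i+1}]=\mathbb{R}$, and in particular $f$ has infinitely many real zeros of multiplicity $\geq k$ (i.e. points where $f,f',\dots,f^{(k-1)}$ all vanish).

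The key step is to bound $M_k(f)$ by the maximal gap of $X$. Let $\{w_n:n\in\mathbb{Z}\}$, with $w_n<w_{n+1}$, be the increasing enumeration of all real zeros of $f$ of multiplicity at least $k$; this set is discrete, it contains $X$, and it exhausts $\mathbb{R}$ in the sense just obtained. For any consecutive pair $w_n<w_{n+1}$ there is no point of $X$ in the open interval $(w_n,w_{n+1})$, so $(w_n,w_{n+1})$ lies inside a single gap $(x_i,x_{i+1})$ of $X$; therefore $w_{n+1}-w_n\leq x_{i+1}-x_i\leq \sup_i(x_{i+1}-x_i)\leq \nu_k/\sigma$. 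Taking the supremum over $n$ yields $M_k(f)\leq \nu_k/\sigma$.

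On the other hand, $f\ne 0$ has infinitely many real zeros of multiplicity $\geq k$, so the Theorem applies and produces at least one consecutive pair of such zeros whose distance apart is \emph{strictly} greater than $\nu_k/\sigma$; hence $M_k(f)>\nu_k/\sigma$. This contradicts the bound of the previous paragraph, so $f\equiv 0$, and $X$ is a set of uniqueness of order $k-1$ for $\mathcal{B}_\sigma$.

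I do not expect a serious obstacle: the whole argument is a short contrapositive of the Theorem. The two points that require care are, first, the reduction ensuring that $X$ has no finite accumulation point and exhausts $\mathbb{R}$, which is what lets one trap each pair of consecutive zeros of $f$ inside a single gap of $X$ (the harmless observation being that any additional zeros of $f$ only make the gaps smaller); and second — the genuinely subtle point — the borderline case $\sup_i(x_{i+1}-x_i)=\nu_k/\sigma$. Here the weaker consequence $d_k\geq \nu_k/\sigma$ would only give $M_k(f)\geq \nu_k/\sigma$, which is consistent with equality and does not close the argument; it is precisely the \emph{strict} inequality in the conclusion of the Theorem that excludes equality and finishes the proof.
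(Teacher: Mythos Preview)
Your argument is correct and is exactly the contrapositive deduction from the Theorem that the paper intends; the paper states the corollary without proof, treating it as an immediate consequence, and your write-up simply spells out that deduction (including the care about accumulation points and the strict inequality needed for the borderline $\delta=\nu_k/\sigma$).
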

Since $\nu_2=\lambda_{2,1}=2\pi$, we obtain the following sharp results.
\begin{cor}
The constant $d_2$ is equal to $\dfrac{2\pi}{\sigma}$.
\end{cor}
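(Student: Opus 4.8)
The plan is to sandwich $d_2$ between a lower and an upper bound that coincide. The Theorem proved above gives, for every positive integer $k$, the chain $\frac{\nu_k}{\sigma}\leq d_k\leq\frac{k\pi}{\sigma}$, where $\nu_k=\lambda_{k,k-1}$ is the Wirtinger--Cimmino constant introduced in Section~2. Specializing this to $k=2$ yields $\frac{\nu_2}{\sigma}\leq d_2\leq\frac{2\pi}{\sigma}$, so everything reduces to the numerical value of $\nu_2$.

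First I would recall the explicit value $\nu_2=\lambda_{2,1}=2\pi$ recorded after the error estimate \eqref{errorestimate} (taken from \cite{Agarwal}). Substituting it into the lower bound gives $\frac{2\pi}{\sigma}\leq d_2$, which together with the upper bound $d_2\leq\frac{2\pi}{\sigma}$ forces $d_2=\frac{2\pi}{\sigma}$. For completeness I would also point out that the upper bound is witnessed directly by the family $g_N\in\mathcal{B}_\sigma$ constructed just before the Theorem: taking $k=2$ there gives $M_2(g_N)=\frac{4(N+1)\pi}{(2N+1)\sigma}\longrightarrow\frac{2\pi}{\sigma}$ as $N\to\infty$, so $d_2\leq\frac{2\pi}{\sigma}$ independently of the general estimate $d_k\leq\frac{k\pi}{\sigma}$.

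There is essentially no obstacle at the level of the corollary itself; all the work has already been carried out. The substance is concentrated in the Theorem, in particular in the strictness of Cimmino's inequality --- justified because its extremal eigenfunction is a finite linear combination of terms $x^{l}e^{\lambda_l x}\cos\mu_l x$ and $x^{l}e^{\lambda_l x}\sin\mu_l x$ and hence fails to be square integrable on $\mathbb{R}$ --- combined with Bernstein's inequality. The only feature special to this corollary is the coincidence $\nu_2=2\pi$, i.e.\ that $\nu_k$ equals $k\pi$ exactly when $k=1,2$; this is precisely why the same reasoning does not pin down $d_k$ for $k\geq 3$, where already $\nu_3=8.9868<3\pi$.
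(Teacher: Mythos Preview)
Your proposal is correct and follows exactly the paper's approach: the corollary is an immediate consequence of the Theorem's inequalities $\dfrac{\nu_k}{\sigma}\le d_k\le\dfrac{k\pi}{\sigma}$ together with the recorded value $\nu_2=2\pi$. The additional remarks you include about the witnessing family $g_N$ and the failure of the coincidence for $k\ge 3$ are accurate elaborations but go beyond what the paper itself spells out.
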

\begin{cor}
If $X=\{x_i:i\in\mathbb{Z}\}$ is sequence of real numbers such that $\sup\limits_{i}(x_{i+1}-x_i)\leq\dfrac{2\pi}{\sigma}$, 
then  $X$ is a set of uniqueness of order $1$, \textit{i.e.,} if $f(x_i)=f^\prime(x_i)=0$, for all $i\in\mathbb{Z}$, then $f\equiv 0$.
\end{cor}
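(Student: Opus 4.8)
The plan is to obtain this corollary as the case $k=2$ of the preceding Corollary, so that the only genuinely new input is the explicit value $\nu_2=\lambda_{2,1}=2\pi$ recorded in Section~2 after the error estimate \eqref{errorestimate}. Concretely, it suffices to show that if $f\in\mathcal{B}_\sigma$ satisfies $f(x_i)=f'(x_i)=0$ for every $i\in\mathbb{Z}$ while $\sup_i(x_{i+1}-x_i)\leq 2\pi/\sigma$, then $f$ is identically zero.

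First I would dispose of the degenerate situation. Since $X=\{x_i:i\in\mathbb{Z}\}$ is strictly increasing with uniformly bounded consecutive gaps, either $x_i\to\pm\infty$ as $i\to\pm\infty$, in which case $\bigcup_{i\in\mathbb{Z}}[x_i,x_{i+1}]=\mathbb{R}$, or the sequence accumulates at a finite point; in the latter case $f$ has a non-isolated real zero and the identity theorem for entire functions forces $f\equiv0$. Hence we may assume the first case. Now suppose toward a contradiction that $f\not\equiv0$. Each $x_i$ is then a real zero of $f$ of multiplicity $\geq 2$ in the sense used in the paper (i.e. $f^{(l)}(x_i)=0$ for $l=0,1$), so $f$ has infinitely many real zeros of multiplicity $2$; moreover these zeros---being a superset of $X$, which already exhausts $\mathbb{R}$ in consecutive intervals of length $\leq 2\pi/\sigma$, and having no finite accumulation point since $f\not\equiv0$---can be arranged in a doubly infinite increasing sequence whose consecutive gaps are still $\leq 2\pi/\sigma$. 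Applying the Theorem just established (with $k=2$), there must exist a pair of consecutive such zeros at distance strictly greater than $\nu_2/\sigma=2\pi/\sigma$, contradicting the hypothesis. Therefore $f\equiv0$, which is precisely the statement that $X$ is a set of uniqueness of order $1$.

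I do not expect any real obstacle here: all the substance has already been carried out in the proof of the Theorem, whose only delicate point---that Cimmino's inequality holds \emph{strictly} for $f\in\mathcal{B}_\sigma$---was settled there by observing that the extremal eigenfunction of $u^{(4)}+\lambda u''=0$ is a linear combination of $1$, $x$, $\cos\sqrt{\lambda}\,x$, $\sin\sqrt{\lambda}\,x$ and hence cannot lie in $L^2(\mathbb{R})$. The only bookkeeping specific to the present corollary is that the non-strict hypothesis $\sup_i(x_{i+1}-x_i)\leq 2\pi/\sigma$ remains incompatible with the strict conclusion of the Theorem, which it plainly is, together with the trivial remark that adjoining any extra double zeros of $f$ to $X$ can only shrink the consecutive gaps.
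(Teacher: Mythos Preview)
Your proposal is correct and follows essentially the same route as the paper: both obtain the result as the special case $k=2$ of the preceding Corollary (which in turn rests on the Theorem), using the explicit value $\nu_2=2\pi$ recorded in Section~2. The paper in fact gives no separate proof at all beyond the remark ``Since $\nu_2=\lambda_{2,1}=2\pi$, we obtain the following sharp results''; your additional handling of the degenerate accumulation case and the bookkeeping about consecutive gaps is extra care rather than a different argument.
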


\section{Reconstruction Algorithms}
Let $\mathcal{D}$ denote the differentiation operator on $\mathcal{B}_\sigma$.  By Bernstein inequality, it is a bounded operator on $\mathcal{B}_\sigma$.
If $f\in\mathcal{B}_\sigma$, then $|f(x)|\to 0$ as $|x|\to \infty$ which implies that
$\int\limits_{-\infty}^\infty f^\prime(x)\overline{g(x)}\mathrm{d}x=-\int\limits_{-\infty}^\infty f(x)\overline{g^\prime(x)}\mathrm{d}x$. Therefore, $\mathcal{D}$ is a skew-Hermitian operator on $\mathcal{B}_\sigma.$
Let us define $\mathcal{B}_{\sigma}^{k-1}:= \mathcal{D}^{k-1}(\mathcal{B}_\sigma)$. Clearly $\mathcal{B}_\sigma^{k-1}$ is a subspace (not necessarily closed) of $L^2(\mathbb{R})$.

%Consider the operator $P:L^2(\mathbb{R})\to \overline{\mathcal{B}_{\sigma}^{k-1}}$ by
%\begin{eqnarray}\label{pap2eqn2.6}
%(Pf)(x):=\langle f^{(k-1)}, K_x\rangle,
%\end{eqnarray}
%where $K_x(t)=\dfrac{\sin \sigma\left(t-x\right)}{\sigma \left(t-x\right)}$. Then
Let $P$ be the orthogonal projection of $L^2(\mathbb{R})$ onto $\overline{\mathcal{B}_{\sigma}^{k-1}}$.
Now assume that $f$ and its first $k-1$ derivatives $f', \dots,f^{(k-1)}$ are sampled at a sequence $(x_i)_{i\in\mathbb{Z}}$.
Define the approximation operator for $f\in \mathcal{B}_{\sigma}$
\begin{align*}
A[f^{(k-1)}]=P\left(\left[\ds\sum\limits_{i\in\mathbb{Z}}\left(H_{2k-1}^{(k-1)}(x_i,x_{i+1},f;\cdot)\right)\chi_{[x_i,x_{i+1}]}\right]\right),
\end{align*}
where $H_{2k-1}(x_i,x_{i+1},f;\cdot)$ denotes the Hermite interpolation of $f$ in the interval $[x_i,x_{i+1}]$.
Since  $f^{(k-1)}=Pf^{(k-1)}=P\left(\ds\sum\limits_{i\in\mathbb{Z}}f^{(k-1)}\chi_{[x_i,x_{i+1}]}\right)$ for all $f\in \mathcal{B}_\sigma$ and the characteristic functions $\chi_{[x_i,x_{i+1}]}$ 
have mutually disjoint support, it can be easily shown that
\begin{eqnarray*}
\left\|f^{(k-1)}-A[f^{(k-1)}]\right\|_2^2&\leq&\ds\sum\limits_{i\in\mathbb{Z}}\int\limits_{x_i}^{x_{i+1}}|f^{(k-1)}(x)-H_{2k-1}^{(k-1)}(x_i,x_{i+1},f;x)|^2~\mathrm{d}x.
\end{eqnarray*}
If $\sup\limits_i(x_{i+1}-x_i)=\delta$, then it follows from \eqref{errorestimate} that
\begin{eqnarray}\label{eqn4.1}
\left\|f^{(k-1)}-A[f^{(k-1)}]\right\|_2^2
&\leq&\ds\sum\limits_{i\in\mathbb{Z}}\left(\dfrac{\delta}{\nu_{k}}\right)^2
\int\limits_{x_i}^{x_{i+1}}|f^{(k)}(x)|^2~\mathrm{d}x\nonumber\\
&=&\left(\dfrac{\delta}{\nu_{k}}\right)^2\|f^{(k)}\|_2^2\nonumber\\
&\leq&\left(\dfrac{\delta}{\nu_{k}}\right)^2\sigma^2\|f^{(k-1)}\|_2^2,
\end{eqnarray}
using Bernstein's inequality. As $\|Af^{(k-1)}\|_2\leq \|f^{(k-1)}-Af^{(k-1)}\|_2+\|f^{(k-1)}\|_2$, it follows from the inequality \eqref{eqn4.1} 
that the operator $A$ is  bounded on $\mathcal{B}_\sigma^{k-1}$. Hence the operator $A$ can be uniquely extended to a bounded operator $\widetilde A$ on $\overline{\mathcal{B}_\sigma^{k-1}}$ such that
\begin{eqnarray}
\|g-\widetilde Ag\|_2
&\leq&\left(\dfrac{\delta\sigma}{\nu_{k}}\right)\|g\|_2,
\end{eqnarray}
for all $g\in \overline{\mathcal{B}_\sigma^{k-1}}$.
If $\delta<\dfrac{\nu_{k}}{\sigma}$, then $\dfrac{\delta\sigma}{\nu_{k}}<1$ which implies that the operator $\widetilde A$ is invertible on $\overline{\mathcal{B}_\sigma^{k-1}}$.
Thus  we can obtain the following result as a corollary of Theorem \ref{thm2.6}.
\begin{thm}\label{pap3thm2.2}
Suppose that $f$ and its first $k-1$ derivatives $f', \dots,f^{(k-1)}$ are sampled at a sequence $(x_i)_{i\in\mathbb{Z}}$. 
If $\delta<\dfrac{\nu_{k}}{\sigma}$, then  any $f\in \mathcal{B}_\sigma$ can be reconstructed from the sample values 
$\{f^{(j)}(x_i)F:j=0,1,\dots,k-1,i\in\mathbb{Z}\}$ using the following iteration algorithm.
Set
\begin{eqnarray*}
f_0&=&A f^{(k-1)}=P\left(\ds\sum\limits_{i\in\mathbb{Z}}H_{2k-1}^{(k-1)}(x_i,x_{i+1},f;\cdot)\chi_{[x_i,x_{i+1}]}\right),\\
f_{n+1}&=&f_n+\widetilde A(f^{(k-1)}-f_n),~ n\geq 0,
\end{eqnarray*}
where $H_{2k-1}(x_i,x_{i+1},f;\cdot)$ denotes the Hermite interpolation of $f$ in the interval $[x_i,x_{i+1}]$.
Then we have $\lim\limits_{n\to\infty}f_n=f^{(k-1)}$.
The error estimate after $n$ iterations becomes
\begin{eqnarray*}
\|f^{(k-1)}-f_n\|_2&\leq&\left(\dfrac{\delta\sigma}{\nu_{k}}\right)^{(n+1)}\|f^{(k-1)}\|_2.
\end{eqnarray*}

\end{thm}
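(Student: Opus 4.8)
\noindent\emph{Proof idea.} The plan is to read the statement off Theorem \ref{thm2.6}, since all the analytic content is already contained in the discussion preceding it. First I would fix $\mathcal{H}:=\overline{\mathcal{B}_\sigma^{k-1}}$ as the ambient Hilbert space and take as operator the bounded extension $\widetilde A$ of the approximation operator $A$. The single inequality doing the work is $\|g-\widetilde Ag\|_2\leq C\|g\|_2$ for all $g\in\mathcal{H}$, with $C:=\delta\sigma/\nu_{k}$, which was already obtained from the Hermite error estimate \eqref{errorestimate}, the disjointness of the supports of the indicators $\chi_{[x_i,x_{i+1}]}$, and Bernstein's inequality. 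The hypothesis $\delta<\nu_{k}/\sigma$ gives $0<C<1$, so Theorem \ref{thm2.6} applies verbatim to $\widetilde A$ on $\mathcal{H}$: it is invertible, and its prescribed iteration converges with geometric ratio $C$.

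Next I would specialise to $g=f^{(k-1)}$, the one check being that this is legitimate and that the resulting iteration matches the one in the statement. For $f\in\mathcal{B}_\sigma$ we have $f^{(k-1)}=\mathcal{D}^{k-1}f\in\mathcal{B}_\sigma^{k-1}\subseteq\mathcal{H}$, and on the dense subspace $\mathcal{B}_\sigma^{k-1}$ the extension $\widetilde A$ agrees with $A$; hence the initialisation $g_0=\widetilde Ag$ of Theorem \ref{thm2.6} is literally $f_0=Af^{(k-1)}$, while the recursion $f_{n+1}=f_n+\widetilde A(f^{(k-1)}-f_n)$ and the bound $\|f^{(k-1)}-f_n\|_2\leq C^{n+1}\|f^{(k-1)}\|_2$ come for free, giving $\lim_{n\to\infty}f_n=f^{(k-1)}$.

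Then I would justify the phrase ``reconstructed from the sample values'', the only ingredient not handed to us by Theorem \ref{thm2.6}. By induction, each $f_n$ depends only on $\{f^{(j)}(x_i):0\leq j\leq k-1,\ i\in\mathbb{Z}\}$: the Hermite polynomial $H_{2k-1}(x_i,x_{i+1},f;\cdot)$ uses only $f^{(j)}(x_i),f^{(j)}(x_{i+1})$ for $0\leq j\leq k-1$, so $f_0$ is assembled from the data by a fixed recipe (form the piecewise-Hermite function, then apply the fixed projection $P$), and the recursion preserves this property because $\widetilde A$ is a fixed bounded operator. To return from $f^{(k-1)}$ to $f$, I would note that $\mathcal{D}^{k-1}$ is injective on $\mathcal{B}_\sigma$ (a bandlimited function with vanishing $(k-1)$st derivative is a polynomial in $L^2(\mathbb{R})$, hence $0$), so $f^{(k-1)}$ determines $f$; explicitly one integrates $k-1$ times and fixes the constants from the sampled values at a single node, as in the $k=2$ case of Theorem \ref{thm1.1}.

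The main obstacle here is bookkeeping rather than mathematics: one must keep $\widetilde A$ in the recursion, since the iterates $f_n$ need not lie in the possibly non-closed space $\mathcal{B}_\sigma^{k-1}$, but keep $A$ in the initialisation, where $f^{(k-1)}$ does lie in $\mathcal{B}_\sigma^{k-1}$; and one must remember that $\mathcal{B}_\sigma^{k-1}$ need not be closed, so that passing to the closure $\mathcal{H}$ and to $\widetilde A$ is genuinely necessary. Conflating these is essentially the only place the argument could slip.
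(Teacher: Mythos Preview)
Your proposal is correct and follows exactly the paper's approach: the paper presents this theorem as an immediate corollary of Theorem~\ref{thm2.6}, applied to the operator $\widetilde A$ on $\overline{\mathcal{B}_\sigma^{k-1}}$ with contraction constant $C=\delta\sigma/\nu_k$, all of which is set up in the discussion preceding the statement. Your write-up is in fact more detailed than the paper's, which simply says ``we can obtain the following result as a corollary of Theorem~\ref{thm2.6}''; your additional remarks on the $A$ versus $\widetilde A$ bookkeeping and on recovering $f$ from $f^{(k-1)}$ are welcome clarifications.
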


As a consequence of the above theorem, we construct a frame (or atomic system) for differentiation operator on $\mathcal{B}_\sigma$.
Let $c_{i,l}=\ds\int\limits_{x_i}^{x_{i+1}}\dfrac{(x-x_{i+1})^{2l}}{l!^2}~\mathrm{d}x$. This can also be written as
\begin{eqnarray*}
c_{i,l}=\dfrac{(x_{i+1}-x_i)^{2l+1}}{(2l+1)l!^2}=\ds\int\limits_{x_i}^{x_{i+1}}\dfrac{(x-x_{i})^{2l}}{l!^2}~\mathrm{d}x.
\end{eqnarray*}
Let $X$ be separated by a constant $\gamma>0$, \textit{i.e.,} assume that $\inf\limits_{i\neq j} |x_i-x_j|\geq\gamma$.
It is proved in \cite{AntoRad2} that
$\dfrac{1}{2kC(k)}\ds\sum\limits_{i\in\mathbb{Z}}\int\limits_{x_i}^{x_{i+1}}|H_{2k-1}(x_i,x_{i+1},f;x)|^2~\mathrm{d}x$
\begin{eqnarray}\label{eqn4.3}
&\leq&\ds\sum\limits_{i\in\mathbb{Z}}
\ds\sum\limits_{l=0}^{k-1}|f^{(l)}(x_i)|^2(c_{i,l}+c_{i-1,l})\leq B\| f\|_2^2,
\end{eqnarray}
where
$B=2\left(\ds\sum\limits_{l=0}^{k-1}\dfrac{(\delta\sigma)^{2l}}{l!^2}\right)e^{\delta^2+\sigma^2}$ and $C(k)=\left[\ds\sum\limits_{s=0}^{k-1}{k+s-1\choose s} \right]^2$.
%Let $g_0(x)=(x-\eta)^{-k}$ and $g_1(x)=(x-\xi)^{-k}$. Then
%\begin{eqnarray*}
%g_0^{(s)}(x)&=&(-1)^s k (k+1)\dots(k-1+s)(x-\eta)^{-(k+s)}~\textrm{and}\\
%g_1^{(s)}(x)&=&(-1)^s k (k+1)\dots(k-1+s)(x-\xi)^{-(k+s)}.
%end{eqnarray*}

\begin{thm}\label{thm4.2}
If $X=\{x_i\}$ is a separated set such that $\delta<\dfrac{\nu_{k}}{\sigma}$, then for every $f\in \mathcal{B}_\sigma$, we have
\begin{eqnarray}\label{eqn4.4}
A\| \mathcal{D}^{k-1}f\|_2^2&\leq&\ds\sum\limits_{i\in\mathbb{Z}}
\ds\sum\limits_{l=0}^{k-1}|f^{(l)}(x_i)|^2(c_{i,l}+c_{i-1,l})\leq B\| f\|_2^2,
\end{eqnarray}
where
$A=\left(1-\dfrac{\delta\sigma}{\nu_k}\right)^{2}\dfrac{1}{2k C(k)}\dfrac{\gamma^{2(k-1)}}{\mu_{2k-1}^{k-1}}$,
$B=2\left(\ds\sum\limits_{l=0}^{k-1}\dfrac{(\delta\sigma)^{2l}}{l!^2}\right)e^{\delta^2+\sigma^2}$, and\\
$C(k)=\left[\ds\sum\limits_{s=0}^{k-1}{k+s-1\choose s} \right]^2$,
\textit{i.e.,} $X$ is a stable set of sampling of order $(k-1,k-1)$ for $\mathcal{B}_\sigma$ with respect to the weight $\{c_{il}+c_{i-1,l}:l=0,\dots,k-1,i\in\mathbb{Z}\}$.
\end{thm}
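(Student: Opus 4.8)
The right-hand (Bessel) inequality in \eqref{eqn4.4} is exactly the upper bound already contained in \eqref{eqn4.3} from \cite{AntoRad2}, so the whole content of the statement is the lower bound; equivalently, I must dominate $\|\mathcal{D}^{k-1}f\|_2^2=\|f^{(k-1)}\|_2^2$ by a fixed multiple of the weighted sample sum. The plan is to chain four estimates: (i) the contraction estimate for the approximation operator $A$ (and its bounded extension $\widetilde A$) set up just before Theorem~\ref{pap3thm2.2}; (ii) the fact that the orthogonal projection $P$ does not increase the $L^2$-norm, together with the essential disjointness of the supports of the $\chi_{[x_i,x_{i+1}]}$; (iii) an iterated application of Schmidt's inequality \eqref{pap3eqn1.4} to the piecewise Hermite polynomial, converting the $L^2$-norm of its $(k-1)$-st derivative back into that of the polynomial itself; and (iv) the left-hand inequality of \eqref{eqn4.3}.

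First I would apply the norm estimate $\|g-\widetilde A g\|_2\le(\delta\sigma/\nu_k)\|g\|_2$, valid for all $g\in\overline{\mathcal{B}_\sigma^{k-1}}$, to $g=f^{(k-1)}$; since $\delta\sigma/\nu_k<1$ the reverse triangle inequality gives $\|A[f^{(k-1)}]\|_2\ge(1-\delta\sigma/\nu_k)\|f^{(k-1)}\|_2$. Because $A[f^{(k-1)}]=P\bigl(\sum_i H_{2k-1}^{(k-1)}(x_i,x_{i+1},f;\cdot)\,\chi_{[x_i,x_{i+1}]}\bigr)$, $\|P\|\le 1$, and the intervals $[x_i,x_{i+1}]$ overlap only at their endpoints, this becomes
\begin{equation*}
\Bigl(1-\tfrac{\delta\sigma}{\nu_k}\Bigr)^{2}\|f^{(k-1)}\|_2^2\le\sum_{i\in\mathbb{Z}}\int_{x_i}^{x_{i+1}}\bigl|H_{2k-1}^{(k-1)}(x_i,x_{i+1},f;x)\bigr|^2\,\mathrm{d}x.
\end{equation*}
On each interval $[x_i,x_{i+1}]$ the function $H_{2k-1}(x_i,x_{i+1},f;\cdot)$ is a polynomial of degree at most $2k-1$, so applying Schmidt's inequality $k-1$ times, the admissible degree dropping by one at each step, yields
\begin{equation*}
\int_{x_i}^{x_{i+1}}\bigl|H_{2k-1}^{(k-1)}\bigr|^2\,\mathrm{d}x\le\frac{\mu_{2k-1}\mu_{2k-2}\cdots\mu_{k+1}}{(x_{i+1}-x_i)^{2(k-1)}}\int_{x_i}^{x_{i+1}}\bigl|H_{2k-1}\bigr|^2\,\mathrm{d}x\le\frac{\mu_{2k-1}^{\,k-1}}{\gamma^{2(k-1)}}\int_{x_i}^{x_{i+1}}\bigl|H_{2k-1}\bigr|^2\,\mathrm{d}x,
\end{equation*}
where I bounded the product of the $k-1$ factors by $\mu_{2k-1}^{k-1}$ using that $n\mapsto\mu_n$ is increasing and replaced $(x_{i+1}-x_i)^{-2(k-1)}$ by $\gamma^{-2(k-1)}$ using the separation hypothesis $x_{i+1}-x_i\ge\gamma$. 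Summing over $i$ and inserting the left-hand inequality of \eqref{eqn4.3} gives
\begin{equation*}
\Bigl(1-\tfrac{\delta\sigma}{\nu_k}\Bigr)^{2}\|f^{(k-1)}\|_2^2\le\frac{\mu_{2k-1}^{\,k-1}}{\gamma^{2(k-1)}}\cdot 2kC(k)\sum_{i\in\mathbb{Z}}\sum_{l=0}^{k-1}|f^{(l)}(x_i)|^2\bigl(c_{i,l}+c_{i-1,l}\bigr),
\end{equation*}
which, after dividing through, is the asserted lower bound with the stated constant $A$.

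The only delicate point is the iterated Schmidt inequality in step (iii): one must keep track of the fact that each differentiation lowers the admissible degree, so that the constants which genuinely appear are $\mu_{2k-1},\mu_{2k-2},\dots,\mu_{k+1}$ --- exactly $k-1$ of them --- and only afterwards should one overestimate their product by $\mu_{2k-1}^{k-1}$; the separation constant $\gamma$ enters precisely here, through the uniform lower bound on the interval lengths. Everything else is bookkeeping, and the absolute convergence of the series allowing the interchange of summation and integration throughout is already guaranteed by the Bessel bound in \eqref{eqn4.3}.
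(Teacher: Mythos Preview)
Your proposal is correct and follows essentially the same route as the paper: bound $\|f^{(k-1)}\|_2$ by $\|Af^{(k-1)}\|_2$ via the contraction estimate, drop the projection $P$, apply Schmidt's inequality $k-1$ times on each interval, and then invoke the left inequality of \eqref{eqn4.3}. The only cosmetic differences are that the paper phrases step~(i) through the Neumann bound $\|\widetilde A^{-1}\|\le(1-\|I-\widetilde A\|)^{-1}$ rather than the reverse triangle inequality, and it applies Schmidt with the fixed degree $2k-1$ at every iteration (which directly produces $\mu_{2k-1}^{k-1}$) instead of tracking the decreasing degrees and then overestimating, as you do.
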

\begin{proof}
Recall $\widetilde{A}f^{(k-1)}=Af^{(k-1)}=P\left(\ds\sum\limits_{i\in\mathbb{Z}}H_{2k-1}^{(k-1)}(x_i,x_{i+1},f;\cdot)\chi_{[x_i,x_{i+1}]}\right)$. Then
\begin{eqnarray}\label{pap3eqn2.8}
\|f^{(k-1)}\|_2^2&=&\|\widetilde{A}^{-1}\widetilde{A}f^{(k-1)}\|_2^2\nonumber\\
&\leq&\|\widetilde{A}^{-1}\|^2\|Af^{(k-1)}\|_2^2\nonumber\\
&\leq&(1-\|I-\widetilde{A}\|)^{-2}\|Af^{(k-1)}\|_2^2\nonumber\\
&\leq&\left(1-\dfrac{\delta\sigma}{\nu_k}\right)^{-2}\|Af^{(k-1)}\|_2^2.
\end{eqnarray}
We now estimate the value of $\|Af^{(k-1)}\|_2$.
\begin{eqnarray}\label{eqn4.6}
\|A f^{(k-1)}\|_2^2
&\leq&\left\|\ds\sum\limits_{i\in\mathbb{Z}}H_{2k-1}^{(k-1)}(x_i,x_{i+1},f;\cdot)\chi_{[x_i,x_{i+1}]}\right\|_2^2\nonumber\\
&=&\ds\int\limits_{\mathbb{R}}\left|\ds\sum\limits_{i\in\mathbb{Z}}H_{2k-1}^{(k-1)}(x_i,x_{i+1},f;x)\chi_{[x_i,x_{i+1}]}(x)\right|^2~\mathrm{d}x\nonumber\\
&\leq&\ds\sum\limits_{i\in\mathbb{Z}}\int\limits_{x_i}^{x_{i+1}}|H_{2k-1}^{(k-1)}(x_i,x_{i+1},f;x)|^2~\mathrm{d}x\nonumber\\
&\leq&\dfrac{\mu_{2k-1}^{k-1}}{\gamma^{2(k-1)}}\ds\sum\limits_{i\in\mathbb{Z}}\int\limits_{x_i}^{x_{i+1}}|H_{2k-1}(x_i,x_{i+1},f;x)|^2~\mathrm{d}x,
\end{eqnarray}
by Schmidt's inequality. Hence the desired inequality \eqref{eqn4.4} follows from \eqref{eqn4.3} and \eqref{eqn4.6}.
\end{proof}
\begin{cor}\label{cor4.1}
 If $\{x_i\}$ is a separated set such that $\delta<\dfrac{\nu_{k}}{\sigma}$, then there exists a Bessel sequence $\{g_{i,l}:i\in\mathbb{Z},l=0,\dots,k-1\}$ in $\mathcal{B}_\sigma$ such that
 $$f^{(k-1)}(x)=\sum\limits_{i\in\mathbb{Z}}\sum\limits_{l=0}^{k-1} f^{(l)}(x_i)g_{i,l}(x),$$
 for every $f\in \mathcal{B}_\sigma$.
 The function $f$ can be computed using the relation $$f^{(l-1)}(x)=\int\limits_{x_i}^x f^{(l)}(t)\mathrm{d}t+f^{(l-1)}(x_i), ~l=k-1, k-2, \dots, 1.$$
\end{cor}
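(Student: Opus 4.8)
The plan is to obtain Corollary \ref{cor4.1} as a direct application of the $K$-frame machinery (Theorem \ref{thm2.1}) to the stable sampling inequality already established in Theorem \ref{thm4.2}. First I would set $\mathcal{H}=\mathcal{B}_\sigma$ and $K=\mathcal{D}^{k-1}$, which is bounded on $\mathcal{B}_\sigma$ by Bernstein's inequality and skew-Hermitian (as noted at the start of Section 4), so $K^*=(-1)^{k-1}\mathcal{D}^{k-1}$ and in particular $\|K^*f\|_2=\|\mathcal{D}^{k-1}f\|_2$. Next I would introduce the weighted sampling functionals: for each $i\in\mathbb{Z}$ and $0\le l\le k-1$, let $h_{i,l}$ be the element of $\mathcal{B}_\sigma$ representing the bounded linear functional $f\mapsto \sqrt{c_{i,l}+c_{i-1,l}}\,f^{(l)}(x_i)$, which exists because $\mathcal{B}_\sigma$ is a reproducing kernel Hilbert space and point evaluation of derivatives is bounded there (explicitly $h_{i,l}=\sqrt{c_{i,l}+c_{i-1,l}}\,\mathcal{D}^l_y\mathcal{K}(\cdot,y)|_{y=x_i}$ up to a conjugation sign). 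With this choice $\sum_{i,l}|\langle f,h_{i,l}\rangle|^2=\sum_{i,l}(c_{i,l}+c_{i-1,l})|f^{(l)}(x_i)|^2$, so inequality \eqref{eqn4.4} of Theorem \ref{thm4.2} says precisely that $\{h_{i,l}\}$ is a $K$-frame for $\mathcal{B}_\sigma$ with $K=\mathcal{D}^{k-1}$.

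Having identified the $K$-frame, I would invoke Theorem \ref{thm2.1}: since $\{h_{i,l}\}$ is a $K$-frame it is an atomic system for $K$, and there exists a Bessel sequence $\{\tilde g_{i,l}\}$ in $\mathcal{B}_\sigma$ with $Kf=\sum_{i,l}\langle f,\tilde g_{i,l}\rangle h_{i,l}$ for all $f$. I would then unwind this: writing $\langle f,h_{i,l}\rangle = \sqrt{c_{i,l}+c_{i-1,l}}\,\overline{?}$ — more carefully, one should take $h_{i,l}$ so that $\langle f,h_{i,l}\rangle$ equals the desired weighted sample; absorbing the scalar $\sqrt{c_{i,l}+c_{i-1,l}}$ appropriately and setting $g_{i,l}:=\sqrt{c_{i,l}+c_{i-1,l}}\;\tilde g_{i,l}$ wait — I would instead keep the unweighted sample functional and rescale: define the analysis vectors so that $\langle f,h_{i,l}\rangle=f^{(l)}(x_i)$ directly, and note \eqref{eqn4.4} is then exactly the $K$-frame inequality with rescaled bounds absorbing $c_{i,l}+c_{i-1,l}$ into the vectors $h_{i,l}$. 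Either way the conclusion of Theorem \ref{thm2.1} gives a Bessel sequence $\{g_{i,l}\}$ with
\[
f^{(k-1)}(x)=(\mathcal{D}^{k-1}f)(x)=\sum_{i\in\mathbb{Z}}\sum_{l=0}^{k-1}f^{(l)}(x_i)\,g_{i,l}(x),
\]
the convergence being in $\mathcal{B}_\sigma$ and hence (by the reproducing kernel property) uniform on compacta.

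Finally, to recover $f$ itself from $f^{(k-1)}$, I would iterate the fundamental theorem of calculus: given $f^{(l)}$ and a base point $x_i$, the formula $f^{(l-1)}(x)=\int_{x_i}^{x}f^{(l)}(t)\,\mathrm{d}t+f^{(l-1)}(x_i)$ reconstructs $f^{(l-1)}$, since the constant of integration $f^{(l-1)}(x_i)$ is one of the given sample values; running this for $l=k-1,k-2,\dots,1$ yields $f$. I would remark that $f^{(l-1)}\in\mathcal{B}_\sigma$ is automatic since differentiation (hence its inverse within $\mathcal{B}_\sigma$, pinned down by a value) preserves the Paley--Wiener class.

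The main obstacle is essentially bookkeeping rather than a genuine difficulty: one must check that the weighted point-evaluation-of-derivative functionals are indeed representable as inner products against honest elements of $\mathcal{B}_\sigma$ (immediate from the RKHS structure and Bernstein), and that the two-sided bound in \eqref{eqn4.4} matches the $K$-frame definition after the rescaling $h_{i,l}\leftrightarrow(c_{i,l}+c_{i-1,l})$ — in particular the lower bound must be read against $\|\mathcal{D}^{k-1}f\|_2=\|K^*f\|_2$, not $\|f\|_2$, which is exactly why a $K$-frame rather than an ordinary frame is the right notion here. Once those identifications are in place, Theorem \ref{thm2.1} does all the work and the FTC step is routine.
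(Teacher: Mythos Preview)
Your proposal is correct and follows essentially the same route as the paper: identify the derivative point-evaluations as inner products against reproducing-kernel derivatives, use the skew-Hermitian property of $\mathcal{D}$ so that $\|K^*f\|_2=\|\mathcal{D}^{k-1}f\|_2$, read Theorem~\ref{thm4.2} as a $K$-frame inequality, and invoke Theorem~\ref{thm2.1}. The paper's own proof is a two-line citation of exactly these ingredients; your version is more explicit about the weight bookkeeping (which, as you note, is harmless since $\gamma\le x_{i+1}-x_i\le\delta$ keeps the $c_{i,l}$ uniformly bounded above and below), and you should use the second identity $K^*f=\sum\langle f,h_{i,l}\rangle g_{i,l}$ from Theorem~\ref{thm2.1} rather than the first to get the samples $f^{(l)}(x_i)$ appearing as coefficients.
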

\begin{proof}
 Since $\mathcal{D}$ is a skew-Hermitian operator on $\mathcal{B}_\sigma$ and $f^{(r)}(x)=(-1)^r\langle f, K_x^{(r)} \rangle$, the result follows from Theorem \ref{thm2.1}.
\end{proof}

We know that $\nu_2=2\pi$. Consequently, we obtain our  Theorem \ref{thm1.1} as a particular case of the above corollary. We do not know how to find the Bessel sequence given in Corollary \ref{cor4.1}.
Since the differentiation operator is not bounded below, we cannot apply the frame reconstruction algorithm also. However, we are going to apply the frame algorthim for a large subclass of $\mathcal{B}_\sigma.$

For given $\epsilon>0$, we introduce a closed subspace in $L^2(\mathbb{R})$
$$\mathcal{B}_{\sigma,\epsilon}:=\left\{f\in L^2(\mathbb{R}): \text{supp}(\widehat{f})\subseteq \left[-\frac{\sigma}{2\pi},-\epsilon\right]\cup\left[\epsilon, \frac{\sigma}{2\pi}\right]\right\}.$$
Clearly $\mathcal{B}_{\sigma,\epsilon}\subseteq \mathcal{B}_\sigma.$  
Since $\|f^\prime\|=\|\widehat{f^\prime}\|=\|2\pi iw\widehat{f}\|\geq2\pi\epsilon\|f\|$, the differentiation operator is bounded below on $\mathcal{B}_{\sigma,\epsilon}$.
Consequently, it follows from Theorem 4.2 that  if $\delta<\dfrac{\nu_{k}}{\sigma}$, there exist $A_\epsilon,B>0$ such that
\begin{eqnarray}\label{pap3eqn2.7}
A_\epsilon\|f\|_2^2&\leq&\ds\sum\limits_{i\in\mathbb{Z}}
\ds\sum\limits_{l=0}^{k-1}|f^{(l)}(x_i)|^2(c_{i,l}+c_{i-1,l})\leq B\| f\|_2^2,
\end{eqnarray}
for every $f\in \mathcal{B}_{\sigma,\epsilon}$. 
Recall that $\mathcal{B}_\sigma$ is a reproducing kernel Hilbert space with reproducing kernel $K(x,y)=\dfrac{\sin \sigma\left(x-y\right)}{\sigma \left(x-y\right)}$.  
\textit{i.e.,} every $f\in \mathcal{B}_\sigma$ can be written as
\begin{eqnarray}
f(x)=\int\limits_{\mathbb{R}}f(t)\dfrac{\sin \sigma\left(t-x\right)}{\sigma \left(t-x\right)}~dt
=\langle f, K_x \rangle,
\end{eqnarray}
where $K_x(t)=K(t,x)$. Moreover, $f^{(r)}(x)=(-1)^r\langle f, K_x^{(r)} \rangle$. Hence, if $\delta<\dfrac{\nu_{k}}{\sigma}$,
it follows from Theorem \ref{thm4.2} that the family $\{\sqrt{c_{i,l}+c_{i-1,l}}K_{x_i}^{(l)}:l=0,1,\dots, k-1, i\in\mathbb{Z}\}$ is a frame for $\mathcal{B}_{\sigma, \epsilon}$ with frame bounds $A_\epsilon$ and $B$.
This leads us to the following reconstruction algorithm for $f\in\mathcal{B}_{\sigma,\epsilon}$ from its samples values.

\noindent{}
\textsf{\underline{Frame Algorithm:}}

Set $S_kf:=\ds\sum\limits_{i\in\mathbb{Z}}
\ds\sum\limits_{l=0}^{k-1}(-1)^lf^{(l)}(x_i)(c_{i,l}+c_{i-1,l})K_{x_i}^{(l)}$
and $\rho=\dfrac{2}{A_\epsilon+B}$. Define
\begin{eqnarray*}
f_0&=&0,\\
f_{n+1}&=&f_n+\rho S_k(f-f_n),~ n\geq 0.
\end{eqnarray*}
Then we have $\lim\limits_{n\to\infty}f_n=f$. The error estimate after $n$ iterations turns out to be
\begin{eqnarray*}
\|f-f_n\|_2&\leq&\left(\dfrac{B-A_\epsilon}{B+A_\epsilon}\right)^n\|f\|_2.
\end{eqnarray*}

\end{document}